\documentclass[reqno,11pt]{amsart}
\usepackage{hyperref}
\usepackage{amsmath}
\usepackage{amsthm}
\usepackage{amssymb}
\usepackage{indentfirst}
\usepackage{amsthm} 
\newtheorem{theorem}{Theorem}[section]

\newtheorem{lemma}{Lemma}[section]

\newtheorem{proposition}{Proposition}[section]

\newcommand{\grad}{\nabla}

\newcommand{\lap}{\Delta}

\newcommand{\Ric}{\operatorname{Ric}}
\newcommand{\Ricphi}{\operatorname{Ric}_\phi}

\newcommand{\inner}[2]{\left\langle #1,#2 \right\rangle}

\usepackage{tikz}
\usetikzlibrary{calc} 

\usepackage{float}
\usepackage[top=1. 2in,bottom=1. 5in,left=1. 2in,right=1. 2in]{geometry}

\title[An Alexandrov-type theorem]{\textbf{An Alexandrov-type theorem in warped product manifolds with radial density}}

\author[Bai]{Jinchuan Bai}
\address[J.B.]{School of Mathematical Sciences\\
Xiamen University\\
361005\\
Xiamen\\
P. R.  China}
\email{baijinchuan@stu.xmu.edu.cn
}

\author[Xia]{Chao Xia}
\address[C.X.]{School of Mathematical Sciences\\
Xiamen University\\
361005\\
Xiamen\\
P. R.  China}
\email{chaoxia@xmu.edu.cn}
\thanks{This work is supported by
NSFC (Grant No.  12271449, 12526203, 12526102) and the Natural Science Foundation of
Fujian Province of China (Grant No.  2024J011008).}

\begin{document}

\maketitle
\begin{abstract}
In this paper, 
we establish a Heintze--Karcher inequality for closed embedded
hypersurfaces in a class of warped product manifolds endowed with radial density. As a consequence, we prove Alexandrov-type theorem for constant weighted mean curvature hypersurfaces in such spaces.
In particular, we prove that a closed
embedded $\lambda$-self-expander in the Euclidean space must be a round sphere centered at the origin.
\end{abstract}

\section{Introduction}

 Alexandrov's celebrated soap-bubble theorem asserts
that every closed, connected, embedded hypersurface in Euclidean space with
constant mean curvature must be a round sphere \cite{Alexandrov1956}.
In addition to Alexandrov's original moving-plane method, several integral
approaches have proved to be particularly effective. Reilly gave a proof based
on his integral formula \cite{Reilly1977}, whereas Ros combined
Heintze--Karcher inequalities with Minkowski-type identities to establish
rigidity results for hypersurfaces with constant higher-order mean curvatures
\cite{Ros1987}. Along this direction, a celebrated result is on Alexandrov's theorem on warped product manifolds by Brendle \cite{Brendle2013}. 

In recent decades, analogous rigidity questions have been studied extensively
in the setting of manifolds with density. Let $(M^{m+1},\bar g)$ be a
Riemannian manifold endowed with the smooth weighted measure
\[
d\mu_\phi=e^{-\phi}\,d\mu_{\bar g},
\]
where $\phi\in C^\infty(M)$ is called the potential function. If
$\Omega\subset M$ is a smooth domain with boundary
$\Sigma=\partial\Omega$, its weighted volume and the weighted area of
$\Sigma$ are defined, respectively, by
\[
\operatorname{Vol}_\phi(\Omega)
=
\int_\Omega e^{-\phi}\,d\mu_{\bar g},
\qquad
\operatorname{Area}_\phi(\Sigma)
=
\int_\Sigma e^{-\phi}\,d\sigma.
\]Manifolds
with density and their associated isoperimetric problems have been
extensively investigated; see, for example, \cite{Morgan2005,
RosalesCaneteBayleMorgan2008}.
A particularly important model is Euclidean space equipped with a radial
log-convex density.
 Rosales, Ca\~nete, Bayle, and Morgan
investigated the corresponding weighted isoperimetric problem and showed,
among other results, that centered balls are stable in the radial log-convex
setting \cite{RosalesCaneteBayleMorgan2008}. Their work led to the
well-known Log-Convex Density Conjecture, commonly attributed to Brakke,
which asserts that balls centered at the origin minimize weighted perimeter
among all sets with a prescribed weighted volume.
Important partial results toward this conjecture were obtained by
Kolesnikov--Zhdanov \cite{KolesnikovZhdanov2011} and Figalli--Maggi
\cite{FigalliMaggi2013}. In particular, Figalli and Maggi established the
conjecture in several significant regimes, including the case of small
weighted volumes and densities sufficiently close to the quadratic Gaussian
model. The conjecture was ultimately resolved in full by Chambers
\cite{Chambers2019}, who proved that, for every smooth radial log-convex
density, the weighted isoperimetric regions are precisely the balls centered
at the origin.

The natural curvature quantity arising in the first variation of weighted
area is the weighted mean curvature
\[
H_\phi
:=
H-\langle\overline\nabla\phi,\nu\rangle,
\]
where $\nu$ is a choice of unit normal and $H$ denotes the unnormalized mean
curvature. The first variation formula shows that the critical points of the
weighted area functional under weighted-volume-preserving variations are
precisely the hypersurfaces satisfying
\begin{equation}\label{eq:CWMC}
H_\phi=\lambda
\end{equation}
for some constant $\lambda\in\mathbb R$. Such hypersurfaces are referred to
as constant weighted mean curvature, or $\phi$-CMC, hypersurfaces. 
The most fundamental and interesting example of a strictly
log-convex density is the  density
\begin{equation}\label{eq:expander-density}
\rho(x)=e^{|x|^2/4},
\end{equation}
which we call it anti-Gaussian density.
In this case, the constant weighted mean curvature hypersurfaces are given by
\begin{equation}\label{eq:lambda-self-expander}
H+\frac12\langle x,\nu\rangle=\lambda.
\end{equation}
In reference, it is usually called
\emph{$\lambda$-self-expanders}.

The anti-Gaussian setting
stands in contrast to the  Gaussian density setting with  $\rho(x)=e^{-|x|^2/4}$, in which
constant weighted mean curvature hypersurfaces give rise to the theory of
$\lambda$-hypersurfaces which satisfying 
\begin{equation}\label{eq:lambda-self-shrinker}
H-\frac12\langle x,\nu\rangle=\lambda.
\end{equation}
see Cheng--Wei \cite{ChengWei2018}. In particular,
the case $\lambda=0$ in the shrinking setting corresponds to the theory of
self-shrinkers, which plays a central role in the analysis of singularities
in mean curvature flow, see for example,
Colding--Minicozzi \cite{ColdingMinicozzi2012}.

We return to the $\lambda$-self-expanders. When $\lambda=0$, equation \eqref{eq:lambda-self-expander} reduces, up to the
choice of orientation and sign convention for the mean curvature, to the
self-expander equation
\[
H+\frac12\langle x,\nu\rangle=0.
\]
Self-expanders generate self-similarly expanding solutions to mean curvature
flow and play an important role in the study of flows emerging from conical
singularities, as well as in the analysis of the large-time behavior of
geometric evolutions. The existence and asymptotic geometry of self-expanders
associated with cones were studied by Ding \cite{Ding2020}. Deruelle and
Schulze established important uniqueness results for asymptotically conical
self-expanders \cite{DeruelleSchulze2020}, while Bernstein and Wang developed
a systematic theory for the moduli space of asymptotically conical
self-expanders \cite{BernsteinWang2021}.

$\lambda$-self-expanders has also received considerable
attention. Ancari and Cheng \cite{AncariCheng2023} investigated rigidity
properties of $\lambda$-self-expanders and obtained several characterization
results under additional geometric assumptions. 
Our main result is the following Alexadrov-type theorem about $\lambda$-self-expanders. 
\begin{theorem}\label{thm1. 1}
Let $\Sigma^m\subset\mathbb R^{m+1}$ be a smooth, closed,
embedded $\lambda$-self-expander. Then $\Sigma$ is a round sphere
centered at the origin. 
\end{theorem}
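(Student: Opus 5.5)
The plan is the Ros-type scheme: a weighted Heintze--Karcher inequality combined with a weighted Minkowski identity, with the equality case forcing umbilicity and centering at the origin. We work in $\mathbb R^{m+1}$ with $\phi(x)=-|x|^2/4$, so $d\mu_\phi=e^{|x|^2/4}dx$. Then $\overline\nabla\phi=-x/2$ and, with $\nu$ the outer normal of the enclosed domain $\Omega$, the $\lambda$-self-expander equation reads $H_\phi=H+\frac12\langle x,\nu\rangle=\lambda$. Embeddedness and closedness give the bounded domain $\Omega$ with $\partial\Omega=\Sigma$.

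\textbf{Step 1 (weighted Heintze--Karcher).} Following Brendle's approach via Reilly's formula, solve $\Delta_\phi f:=\Delta f-\langle\overline\nabla\phi,\overline\nabla f\rangle=1$ in $\Omega$ with $f=0$ on $\Sigma$. Apply the weighted Reilly formula; the Bakry--\'Emery tensor is $\Ric_\phi=\overline\nabla^2\phi=-\frac12 g\le 0$, which has the wrong sign. To repair this, use a conformal weight adapted to the radial structure. Test with $V$-weighted quantities, where $V$ solves the static-type equation for the density; in Euclidean space with radial density a natural candidate is $V=e^{\phi}$-type or the radial field $x$. The aim is an inequality of the form
\[
\int_\Sigma \frac{m}{H_\phi}\,\omega\, e^{-\phi}d\sigma\ \ge\ (m+1)\int_\Omega \omega\, e^{-\phi}dx
\]
for a suitable positive radial weight $\omega$, valid when $H_\phi>0$, with equality only for centered spheres. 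Positivity of $H_\phi$ on all of $\Sigma$ needs a separate check: at a point of $\Sigma$ farthest from the origin, $H\ge m/R>0$ and $\langle x,\nu\rangle=R>0$, so $\lambda>0$. Hence $H_\phi\equiv\lambda>0$ everywhere.

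\textbf{Step 2 (weighted Minkowski identity).} Compute $\operatorname{div}_\Sigma$ of the tangential part of $\omega x$, weighted by $e^{-\phi}$. This yields an identity of the form $\int_\Sigma (m\,\omega - H_\phi\,\omega\langle x,\nu\rangle+\dots)e^{-\phi}=0$. Then use the divergence theorem in $\Omega$ on the field $\omega x e^{-\phi}$ to relate $\int_\Sigma \omega\langle x,\nu\rangle e^{-\phi}$ to $(m+1)\int_\Omega\omega e^{-\phi}$ plus radial-derivative terms. Choose $\omega$ so that these extra terms cancel; the natural choice is $\omega=e^{\phi}=e^{-|x|^2/4}$, which turns both integrals into unweighted Euclidean ones. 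With this choice, the Minkowski identity gives $m|\Sigma|=\int_\Sigma H\langle x,\nu\rangle$, and the divergence theorem gives $(m+1)|\Omega|=\int_\Sigma\langle x,\nu\rangle$. Substituting $H=\lambda-\frac12\langle x,\nu\rangle$ and comparing with the Heintze--Karcher inequality for constant $H_\phi$ should produce the reverse inequality. Equality then holds, so $\Sigma$ is umbilic and therefore a sphere. Plugging a sphere of center $c$ and radius $r$ into the equation gives $\frac mr+\frac12(r+\langle c,\nu\rangle)=\lambda$ for all $\nu$, which forces $c=0$.

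\textbf{Main obstacle.} The hard step is Step 1: obtaining a Heintze--Karcher inequality despite the negative Bakry--\'Emery curvature, and matching its weight exactly to the weight that makes Step 2 close up. I would first try to view $(\mathbb R^{m+1},e^{|x|^2/4})$ as a warped product $dr^2+r^2g_{S^m}$ with radial density. I would then run Brendle's flow-along-normal-geodesics argument (or Qiu--Xia's Reilly argument) with the radial potential $V=r$-dependent function satisfying $\overline\nabla^2V-\Delta_\phi V\,g+V\Ric_\phi\ge0$. Log-convexity of the density is precisely what I expect makes this sub-static condition hold.
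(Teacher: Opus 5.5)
Your overall scheme (weighted Heintze--Karcher, then weighted Minkowski, then the equality case) is the right one. Your farthest-point argument for $\lambda>0$ and your center computation for spheres are fine. The gap is Step~1: it is never proved, and the weight you commit to in Step~2 makes it false. With $\omega=e^{\phi}$ your inequality reads $\int_\Sigma \frac{m}{H_\phi}\,d\sigma\ge (m+1)|\Omega|$. On the centered sphere of radius $R$ one has $H_\phi=\frac mR+\frac R2$. The left side is then $\frac{m}{m+R^2/2}\,|\mathbb S^m|R^{m+1}$ and the right side is $|\mathbb S^m|R^{m+1}$, so the inequality fails exactly on the configuration that should give equality. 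Your own Step~2 computation shows the same thing. It yields $m|\Sigma|=\lambda(m+1)|\Omega|-\frac12\int_\Sigma\langle x,\nu\rangle^2\,d\sigma<\lambda(m+1)|\Omega|$ for every closed $\Sigma$. Combined with your inequality, this would rule out all closed $\lambda$-self-expanders, centered spheres included. The $\langle x,\nu\rangle^2$ term does not cancel; it has to be absorbed by changing the weights. The correct weights are not of your form $m\omega$ on $\Sigma$ and $(m+1)\omega$ in $\Omega$.

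What is missing is the pair $V=m-r\phi'=m+\frac{|x|^2}{2}$ and $W=\operatorname{div}_\phi x=m+1+\frac{|x|^2}{2}$, both integrated against $e^{|x|^2/4}$. Since $-\langle\nabla\phi,x^\top\rangle=\frac12(|x|^2-\langle x,\nu\rangle^2)$, one gets exactly $\operatorname{div}_{\Sigma,\phi}x^\top=V-H_\phi\langle x,\nu\rangle$. Hence $H_\phi\equiv\lambda$ gives $\int_\Sigma V\,d\sigma_\phi=\lambda\int_\Omega W\,d\mu_\phi$, i.e.\ equality in $\int_\Sigma \frac{V}{H_\phi}\,d\sigma_\phi\ge\int_\Omega W\,d\mu_\phi$. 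This inequality is proved by Brendle's flow along normal geodesics of $\hat g=V^{-2}\bar g$ (normal speed $V$), using two ingredients your sketch lacks.

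First, one needs the strengthened weighted sub-static identity $\overline{\Delta}_\phi V-\overline{\nabla}^2V(\nu,\nu)+V\operatorname{Ric}_\phi(\nu,\nu)=\frac V2$. Your displayed condition has the Hessian and Laplacian signs reversed. Moreover, the unstrengthened version would only give $\partial_t(H_\phi/V)\ge|A|^2$, which cannot control $H_\phi^2$ at points where $A=0$ but $\langle x,\nu\rangle\neq0$. The strengthened identity gives $\partial_t(H_\phi/V)=|A|^2+\frac12$.

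Second, in place of $H^2\le m|A|^2$ one needs the extended Cauchy--Schwarz inequality
\[
H_\phi^2=\Bigl(H+\tfrac12\langle x,\nu\rangle\Bigr)^2\le\Bigl(|A|^2+\tfrac12\Bigr)\Bigl(m+\tfrac12\langle x,\nu\rangle^2\Bigr)\le\Bigl(|A|^2+\tfrac12\Bigr)V,
\]
which is precisely what dictates this $V$. Together these give $\partial_t(V/H_\phi)\le -V$. For the weighted Jacobian this yields $\partial_t\bigl(\tfrac{V}{H_\phi}J_\phi\bigr)\le -V(V+1)J_\phi=-VWJ_\phi$, and the coarea formula finishes. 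In the equality case the second Cauchy--Schwarz step forces $\langle x,\nu\rangle^2=|x|^2$. So $|x|$ is constant on $\Sigma$, and $\Sigma$ is a centered sphere directly.
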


In fact, Theorem \ref{thm1. 1} is a special case for the following more general result concerning Alexandrov-type theorem in 
certain warped product manifolds with radial density. 
More precisely, we consider 
\[
(M^{m+1},\bar g, \mu)
=
\bigl(N^m\times[0,\bar r),\,dr^2+\lambda(r)^2g_N, e^{-\phi(r)}\mu_{\bar{g}}\bigr)
\]
where $N$ is a closed $m$-manifold satisfying $\operatorname{Ric}_{N}\ge (m-1)\kappa_N g_N$ for some $\kappa_N \in \mathbb R$ and $\lambda$ is a smooth non-negative function on $[0,\bar r)$.
We impose the following  conditions on the warping function $\lambda$ and the potential $\phi$. 
\medskip
\begin{align*}
\hbox{ either }\textnormal{(C1)}\quad &
\lambda(r)>0, \quad\text{for }r\in[0,\bar r), \\
\hbox{ or }\textnormal{(C1')}\quad & \lambda(r)=r\eta(r^2), \hbox{ where }\eta\hbox{ is a smooth positive function with }\eta(0)=1;\\
\textnormal{(C2)}\quad &
\lambda'(r)>0
\quad\text{for }r\in[0,\bar r);\\
\textnormal{(C3)}\quad &
\phi'(r)\le0
\text{ for }r\in[0,\bar r);\\
\textnormal{(C4)}\quad &
\mathcal C_{\lambda,\phi}\ge0, \hbox{ where } C_{\lambda,\phi}\hbox{ is defined in \eqref{C-lambda} in Section \ref{subsec:2}};\\
\textnormal{(C5)} \quad &\phi'(0)=0, \hbox{ in case }\lambda(r)=r\eta(r^2).
\end{align*}
\noindent
In the case (C1), $M$ is a manifold with boundary $\partial M=N\times \{0\}$ and in the case (C1'), $M$ is smooth at the origin given by $\{r=0\}$.
We remark that $(C4)$ is 
 to ensure  the weighted sub-static condition
 \begin{equation}\label{substatic}
\overline{\lap}_\phi V \bar g
-\overline{\grad}^2V
+V\operatorname{Ric}_{\phi}
+V\phi'\frac{\lambda'}{\lambda}\bar g
\ge0,
\end{equation}
where $V=m\lambda'(r)-\phi'(r)\lambda(r)$.
see Lemma \ref{lem2. 4}. 

\begin{theorem}
\label{cmc-rigidity}
Assume that \(\phi(r)\) and \(\lambda(r)\) satisfy (C1)-(C4) or (C1') and (C2)-(C5). Let $\Sigma\subset M^{m+1}$ be a smooth closed embedded $\phi$-CMC hypersurface, i. e. , $H_\phi$ is constant on $\Sigma$. Then one of the following alternatives holds:
\begin{itemize}
\item[(1)] $\Sigma$ is a coordinate slice $N\times \{r_0\}$ for some $r_0\in (0, \bar r)$; or
\item[(2)] $\Sigma$ is a totally umbilical hypersurface contained in the region where $\phi$ is constant. 
\end{itemize}
\end{theorem}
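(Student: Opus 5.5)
The plan is to follow Ros' integral approach, adapted to the weighted sub-static setting as in Brendle's argument and its weighted analogues. The two ingredients are a weighted Heintze--Karcher inequality and a weighted Minkowski formula; comparing them forces equality in the former.

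\textbf{Step 1 (weighted Heintze--Karcher).} Let $\Omega$ be the domain bounded by $\Sigma$; in case (C1) it may also have the boundary component $N\times\{0\}$. Using the sub-static condition \eqref{substatic} from Lemma \ref{lem2. 4}, I would prove
\[
\int_\Sigma \frac{V}{H_\phi}\,e^{-\phi}\,d\sigma \;\ge\; c_m\int_\Omega \bigl(V' \text{-type term}\bigr)\,e^{-\phi}\,d\mu_{\bar g},
\]
with the natural normalization being $\int_\Sigma \frac{V}{H_\phi}e^{-\phi}\ge \int_\Omega (\text{weighted divergence of } X)e^{-\phi}$. Here $X=\lambda(r)\partial_r$ is the closed conformal field, $\operatorname{div}_\phi X = V$, and $H_\phi>0$ on $\Sigma$. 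Equality should force $\Sigma$ to be umbilic, with the sub-static tensor degenerating in the normal direction.

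For the proof I would use the weighted Reilly formula (a Qiu--Xia / Li--Xia type argument). Solve $\overline\Delta_\phi f + \frac{\phi'\lambda'}{\lambda}\dots = 1$ in the form $\overline\Delta_\phi f - \frac{\overline\Delta_\phi V}{V}f = 1$ in $\Omega$, with $f=0$ on $\Sigma$ and a suitable Neumann condition on $N\times\{0\}$ in case (C1). Then apply the $V$-weighted Reilly identity, dropping the curvature term by \eqref{substatic} and using Cauchy--Schwarz on $(\overline\nabla^2 f - \frac{f}{V}\overline\nabla^2V)$.

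In case (C1) the boundary contribution on $N\times\{0\}$ must have a good sign. Here (C2) and (C3) give $V>0$, and the sign condition on $\partial_\nu V$ at $r=0$ should hold. In case (C1') the condition (C5) guarantees smoothness of $V$ and $\phi$ at the origin, so there is no boundary term.

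\textbf{Step 2 (Minkowski formula).} With $u=\langle X,\nu\rangle$, the tangential part of $X$ gives $\operatorname{div}_\Sigma X^T = m\lambda' - Hu$. Incorporating the weight yields
\[
\int_\Sigma (V - H_\phi u)e^{-\phi}=0.
\]
Since $\Omega$ is enclosed, the divergence theorem gives $\int_\Sigma u e^{-\phi} = \int_\Omega V e^{-\phi}$, up to a boundary term at $r=0$, which vanishes because $\lambda(0)=0$ or is handled as in Step 1.

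\textbf{Step 3 (comparison).} With $H_\phi$ constant, $H_\phi=\lambda_0$, the Minkowski formula gives $\lambda_0\int_\Omega Ve^{-\phi} = \int_\Sigma Ve^{-\phi}$. First I would show $\lambda_0>0$, using a point of maximal $r$ and the signs of $\lambda'$ and $-\phi'$. Heintze--Karcher then gives $\int_\Sigma V e^{-\phi}/\lambda_0 \ge \int_\Omega Ve^{-\phi}$. Hence equality holds, so $\Sigma$ is umbilic and the solution $f$ satisfies $\overline\nabla^2 f = \frac{f}{V}\overline\nabla^2 V + \dots$, together with equality in \eqref{substatic} along $\overline\nabla f$.

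\textbf{Step 4 (classification).} Equality in (C4) is where the two alternatives split. Either $\overline\nabla f$ is radial, in which case $\Sigma$ is a level set of $r$, i.e.\ a slice $N\times\{r_0\}$. Or the $\phi'$-terms must vanish, which forces $\phi'=0$ on $\Omega$; then $\Sigma$ is umbilic in a region of constant $\phi$, giving alternative (2).

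\textbf{Main obstacle.} The hard part is this equality analysis, namely extracting from equality in the sub-static inequality (whose explicit form depends on $\mathcal C_{\lambda,\phi}$) that $\phi'\equiv0$ unless $\Sigma$ is a slice. A secondary difficulty is the boundary term at $N\times\{0\}$ in case (C1). For the former I would first try writing the nonnegative tensor in \eqref{substatic} explicitly as $\alpha(r)\,dr^2 + \beta(r)(\bar g - dr^2)$. Vanishing of $\beta$ or of $\alpha$ along $\overline\nabla f$, combined with the ODE for $f$ along radial lines, should pin down either $\phi'=0$ or radial symmetry.
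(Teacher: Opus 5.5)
Your overall architecture is the paper's: a weighted Minkowski identity plus a weighted Heintze--Karcher inequality, forcing equality. Your maximum-$r$ argument for $H_\phi>0$ is a valid substitute for the paper's, which reads positivity directly off the Minkowski identity since $V,W>0$. The gap is in the Heintze--Karcher step, which is mis-normalized and, as sketched, does not close.

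\textbf{Normalization.} Since $\overline\nabla(\lambda\partial_r)=\lambda'\bar g$, one has $\operatorname{div}_\phi(\lambda\partial_r)=(m+1)\lambda'-\phi'\lambda=W$, not $V$. So the Minkowski identity and the divergence theorem give
\[
\int_\Sigma V\,d\sigma_\phi=H_\phi\bigl(\int_\Omega W\,d\mu_\phi+\lambda(0)\int_{N_0\cap\partial\Omega}d\sigma_\phi\bigr),
\]
and you need Heintze--Karcher with exactly this right-hand side. Under (C1) the $N_0$ term does not vanish ($\lambda(0)>0$), and a boundary contribution of merely ``good sign'' forces nothing.

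Your PDE $\overline\Delta_\phi f-\frac{\overline\Delta_\phi V}{V}f=1$ yields $\int_\Omega V\,d\mu_\phi=\int_\Sigma Vf_\nu\,d\sigma_\phi$. It therefore bounds $\int_\Sigma V/H_\phi$ below only by $\int_\Omega V$, or, even with the correct Hessian estimate, by $(\int_\Omega V)^2/\int_\Omega V^2W^{-1}$. Both are in general strictly smaller than $\int_\Omega W$, so no equality is forced.

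\textbf{Uncontrolled weight terms.} Plain Cauchy--Schwarz on $\overline\nabla^2f-\frac fV\overline\nabla^2V$ controls its unweighted trace. That trace differs from $\overline\Delta_\phi f-\frac fV\overline\Delta_\phi V$ by $\langle\overline\nabla\phi,Z\rangle$, where $Z=\overline\nabla f-\frac fV\overline\nabla V$, so the weight terms are left uncontrolled.

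The missing idea is how the paper absorbs them. The extra term $V\phi'\frac{\lambda'}{\lambda}\bar g$ in \eqref{substatic} supplies the additional nonnegative quantity $-\phi'\lambda'/\lambda$. The augmented Cauchy--Schwarz of Lemma~\ref{lem:weighted-mean-curvature-estimate} (extra slot $\sqrt{b\lambda'}$, $b=-\phi'/\lambda$) converts it into $H_\phi^2\le(|A|^2-\phi'\lambda'/\lambda)V/\lambda'$. Along Brendle's normal flow for $\widehat g=V^{-2}\bar g$ this gives $\partial_t(V/H_\phi)\le-\lambda'V$, and $\lambda'+V=W$ is precisely the needed constant.

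A Reilly-type proof looks feasible, but it would need three things you do not have:
\begin{itemize}
\item the right-hand side $W/V$ instead of $1$;
\item the analogous bound $\bigl(\overline\Delta_\phi f-\frac fV\overline\Delta_\phi V\bigr)^2\le\frac{W}{\lambda'}\bigl(|\overline\nabla^2f-\frac fV\overline\nabla^2V|^2-\phi'\frac{\lambda'}{\lambda}|Z|^2\bigr)$;
\item a way to produce the exact $\lambda(0)$-term on $N_0$, which the paper gets from the terminal term of the flow using $V/H_\phi=\lambda$ on slices.
\end{itemize}

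\textbf{Equality analysis.} This part targets the wrong mechanism. In the model case $\lambda=r$, $\phi=-r^2/4$ one has $\mathcal C_{\lambda,\phi}\equiv0$; in fact the whole tensor in \eqref{substatic} vanishes identically. So degeneracy of the sub-static tensor along $\overline\nabla f$ carries no information and cannot separate the two alternatives.

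In the paper the dichotomy comes instead from equality in the Cauchy--Schwarz inequality of Lemma~\ref{lem:weighted-mean-curvature-estimate}. It yields umbilicity together with $\phi'\langle X,\nu\rangle^2=\phi'|X|^2$, hence $\nu=\pm\partial_r$ wherever $\phi'<0$. An open--closed argument on the connected $\Sigma$ then makes $r$ constant, giving a slice; otherwise $\phi'=0$ on $\Sigma$. In a Reilly approach the analogue would be equality in the augmented Hessian estimate above. That forces $Z\parallel\partial_r$, hence $\nu\parallel\partial_r$ on $\Sigma$ wherever $\phi'<0$ and $f_\nu\neq0$.
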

The proof of Theorem \ref{cmc-rigidity} is based on the following Heintze-Karcher-type inequalities.
\begin{theorem}\label{thm1. 2}
Assume that \(\phi(r)\) and \(\lambda(r)\) satisfy (C1') and (C2)-(C5). 
Let  \(\Sigma\subset N\times(0,\bar r)\) be a smooth closed
hypersurface satisfying $H_\phi>0$.
Then
\begin{equation}\label{hk1}
\int_\Sigma\frac{m\lambda'(r)-\phi'(r)\lambda(r)}{H_\phi}\,d\sigma_\phi
\ge
\int_\Omega [(m+1)\lambda'(r)-\phi'(r)\lambda(r)]\,d\mu_\phi. 
\end{equation}
Furthermore, equality in \eqref{hk1} holds if and only if one of the following alternatives holds:
\begin{enumerate}
\item $\Sigma$ is a coordinate slice $N\times\{r_0\}$ for some $r_0\in (0, \bar r)$; or
\item $\Sigma$ is a totally umbilical hypersurface contained in the region where $\phi$ is constant. 
\end{enumerate}
\end{theorem}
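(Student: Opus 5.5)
We will prove \eqref{hk1} by the weighted Reilly formula approach of Ros, adapted to sub-static warped products in the spirit of Brendle and Li--Xia. Let $\Omega$ be the domain enclosed by $\Sigma$ (by (C1') and compactness of $N$, $\Sigma$ bounds a region containing the origin or one determined by the embedding). Let $V=m\lambda'(r)-\phi'(r)\lambda(r)$; by (C2) and (C3), $V>0$. We will solve the weighted mixed problem
\[
\overline{\lap}_\phi f = (m+1)\lambda'(r)-\phi'(r)\lambda(r) \ \text{ in } \Omega,\qquad f=0 \ \text{ on } \Sigma .
\]
The right-hand side is chosen so that it equals $\overline{\lap}_\phi$ of a potential $\Phi$ with $\overline\grad\Phi=\lambda\partial_r$. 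Then $\overline{\grad}^2\Phi=\lambda'\bar g$, and $\overline\lap_\phi\Phi=(m+1)\lambda'-\phi'\lambda$. A cleaner alternative is to work with a $V$-weighted Reilly formula directly. The key tool will be the weighted Reilly-type formula with the weight $V$, as in Li--Xia. Writing $\overline\lap_\phi f$, $\overline\grad^2 f$, and using the sub-static condition \eqref{substatic} (Lemma \ref{lem2. 4}), we get
\[
\int_\Omega V\Big(\tfrac{(\overline\lap_\phi f)^2}{?}-|\overline\grad^2 f|^2\Big)\,d\mu_\phi\ \ge\ \int_\Sigma V\big(H_\phi\, f_\nu^2\big)\,d\sigma_\phi+\cdots,
\]
where the boundary terms simplify because $f|_\Sigma=0$.

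The order of steps is as follows. (i) Derive the $V$-weighted Reilly identity for $(\Omega,e^{-\phi})$:
\[
\int_\Omega\Big[V\big((\overline\lap_\phi f)^2-|\overline\grad^2f|^2\big)-\big(\overline\lap_\phi V\bar g-\overline\grad^2V+V\Ric_\phi\big)(\overline\grad f,\overline\grad f)\Big]d\mu_\phi=\int_\Sigma V H_\phi f_\nu^2\,d\sigma_\phi+\ldots
\]
for $f|_\Sigma=0$. The extra term $V\phi'\frac{\lambda'}{\lambda}\bar g$ in \eqref{substatic} must be absorbed by an extra term arising because $\overline\lap_\phi f$ is not the trace of $\overline\grad^2 f$: $\overline\lap_\phi f=\operatorname{tr}\overline\grad^2 f-\phi' f_r$. (ii) Handle this by a weighted Cauchy--Schwarz. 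We have
\[
|\overline\grad^2 f|^2\ge \tfrac1{m+1}(\operatorname{tr}\overline\grad^2 f)^2,
\]
and we then bound $(\overline\lap_\phi f)^2$ in terms of this trace and $\phi' f_r$. This is exactly where the density term $V\phi'\lambda'/\lambda$ and condition (C4) are designed to close the estimate. I expect this step to be the main obstacle: the constants $m$ versus $m+1$ and the choice of the right-hand side $(m+1)\lambda'-\phi'\lambda$ must match so that one gets
\[
\int_\Omega (m+1)\lambda'-\phi'\lambda\,d\mu_\phi\le \int_\Sigma\frac{V}{H_\phi}d\sigma_\phi .
\]
(iii) Complete the argument as in Ros. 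Integrating $\int_\Omega \overline\lap_\phi f\cdot(\ldots)=\int_\Sigma V f_\nu$-type identities (a weighted Minkowski formula, $\operatorname{div}_\phi(V\overline\grad f- f\overline\grad V)$) and applying Cauchy--Schwarz $\big(\int_\Sigma V f_\nu\big)^2\le\int_\Sigma V/H_\phi\int_\Sigma VH_\phi f_\nu^2$ yields \eqref{hk1}. Here $H_\phi>0$ is used. Near $r=0$, (C1') and (C5) guarantee that $V$, $\phi$ and the potential are smooth at the origin, so no boundary contribution arises there.

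For the rigidity part, equality forces equality in every inequality. Equality in the Hessian Cauchy--Schwarz gives $\overline\grad^2 f$ proportional to $\bar g$ on $\Omega$. Equality in the sub-static term gives
\[
\Big(\overline\lap_\phi V\bar g-\overline\grad^2V+V\Ric_\phi+V\phi'\tfrac{\lambda'}{\lambda}\bar g\Big)(\overline\grad f,\overline\grad f)=0 .
\]
Moreover, $f_\nu$ is a constant multiple of $1/H_\phi$. If $\phi'\ne0$ somewhere in $\Omega$, the extra equality forces $\overline\grad f$ to be radial, so $f=f(r)$. Hence $\Sigma=\{f=0\}$ is a level set of $r$, i.e.\ a slice $N\times\{r_0\}$. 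Otherwise $\phi$ is constant on $\Omega$, so $\overline\grad^2 f=c\,\bar g$. Then $\Sigma$, a level set of $f$, has second fundamental form $\overline\grad^2f/|\overline\grad f|$ restricted to $T\Sigma$, and is therefore totally umbilical. Conversely, slices and such umbilical hypersurfaces are checked directly to give equality.
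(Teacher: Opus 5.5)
Your Reilly-formula route (Ros, Li--Xia) differs from the paper's. The paper follows Brendle: it runs the inward normal flow of $\widehat g=V^{-2}\bar g$, gets $\partial_t(V/H_\phi)\le-\lambda'V$ from Lemma~\ref{lem2. 4} and Lemma~\ref{lem:weighted-mean-curvature-estimate}, and integrates $\partial_t(hJ_\phi)\le -VWJ_\phi$ with the coarea formula. A Li--Xia argument can also give \eqref{hk1}, but your proposal has a genuine gap at its core.

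\textbf{The identity in step (i) is false.} For $f|_\Sigma=0$, integrating the weighted Bochner formula against $V$ gives
\[
\int_\Omega V\bigl((\overline{\Delta}_\phi f)^2-|\overline{\nabla}^2 f|^2\bigr)\,d\mu_\phi=\int_\Omega\bigl(V\operatorname{Ric}_\phi-\overline{\Delta}_\phi V\,\bar g+\overline{\nabla}^2V\bigr)(\overline{\nabla} f,\overline{\nabla} f)\,d\mu_\phi+\int_\Sigma VH_\phi f_\nu^2\,d\sigma_\phi .
\]
So the $V$-derivative terms enter with the \emph{opposite} sign to the sub-static tensor. The discrepancy with your version is $2\int_\Omega(\overline{\Delta}_\phi V\bar g-\overline{\nabla}^2V)(\overline{\nabla} f,\overline{\nabla} f)\,d\mu_\phi$. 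This term does not vanish for $f$ compactly supported in $\Omega$, so no boundary term in your ``$+\ldots$'' can absorb it. Hence Lemma~\ref{lem2. 4} cannot be invoked. This is exactly why Li--Xia work with the modified quantities
\[
G:=\overline{\Delta}_\phi f-\tfrac{\overline{\Delta}_\phi V}{V}f,\qquad B:=\overline{\nabla}^2f-\tfrac{f}{V}\overline{\nabla}^2V,\qquad Z:=\overline{\nabla} f-\tfrac fV\overline{\nabla} V .
\]
With these, the sub-static tensor appears evaluated on $Z$ with the correct sign, and for $f|_\Sigma=0$ the boundary term is still $\int_\Sigma VH_\phi f_\nu^2\,d\sigma_\phi$.

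\textbf{The decisive step (ii) is a placeholder, and your Dirichlet problem is the wrong one.} What closes the argument is the ambient analogue of Lemma~\ref{lem:weighted-mean-curvature-estimate}. Since $G=\operatorname{tr}B-\phi'\langle\partial_r,Z\rangle$, $|\langle\partial_r,Z\rangle|\le|Z|$ and $m+1-\phi'\lambda/\lambda'=W/\lambda'$, a single Cauchy--Schwarz gives
\[
G^2\le\Bigl(|B|^2-\phi'\tfrac{\lambda'}{\lambda}|Z|^2\Bigr)\tfrac{W}{\lambda'} .
\]
The extra term $-\phi'\frac{\lambda'}{\lambda}|Z|^2$ is paid for by the surplus $-V\phi'\frac{\lambda'}{\lambda}|Z|^2$ in Lemma~\ref{lem2. 4}. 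Together these yield
\[
\int_\Sigma VH_\phi f_\nu^2\,d\sigma_\phi\le\int_\Omega \frac{V^2}{W}G^2\,d\mu_\phi .
\]
You must then take $G=W/V$, i.e.\ solve $\operatorname{div}_\phi\bigl(V^2\overline{\nabla}(f/V)\bigr)=W$ in $\Omega$ with $f=0$ on $\Sigma$. This problem is uniformly elliptic, hence solvable. With this choice, $\int_\Sigma Vf_\nu\,d\sigma_\phi=\int_\Omega W\,d\mu_\phi=\int_\Omega\frac{V^2}{W}G^2\,d\mu_\phi$, and Cauchy--Schwarz on $\Sigma$ gives \eqref{hk1}. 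Your choice $\overline{\Delta}_\phi f=W$, even read as $G=W$, only gives
\[
\int_\Sigma \frac{V}{H_\phi}\,d\sigma_\phi\ge\frac{\bigl(\int_\Omega VW\,d\mu_\phi\bigr)^2}{\int_\Omega V^2W\,d\mu_\phi},
\]
which is strictly weaker unless $V$ is constant.

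\textbf{The rigidity step also needs more.} Equality forces $Z$ (or $\overline{\nabla} f$) to be radial only where $\phi'\neq0$. Concluding ``$f=f(r)$, hence $\Sigma$ is a slice'' needs an additional argument.
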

\begin{theorem}\label{thm1. 2'}
Assume that \(\phi(r)\) and \(\lambda(r)\) satisfy (C1)-(C4). 
Let  \(\Sigma\subset N\times(0,\bar r)\) be a smooth closed
hypersurface satisfying $H_\phi>0$.
\begin{enumerate}
\item If $\Sigma$ is null-homologous, then \eqref{hk1} holds.
\item If $\Sigma$ is homologous to $N_0=N\times \{0\}$,
then
\begin{equation}\label{hk2}
\int_\Sigma\frac{m\lambda'(r)-\phi'(r)\lambda(r)}{H_\phi}\,d\sigma_\phi
\ge
\int_\Omega [(m+1)\lambda'(r)-\phi'(r)\lambda(r)]\,d\mu_\phi
+
\lambda(0)\int_{N_0 \cap \partial \Omega}d\sigma_\phi. 
\end{equation}
Furthermore, equality in \eqref{hk2} holds if and only if one of the following alternatives holds:

\begin{enumerate}
\item $\Sigma$ is a coordinate slice $N\times\{r_0\}$ for some $r_0\in (0, \bar r)$; or
\item $\Sigma$ is a totally umbilical hypersurface contained in the region where $\phi$ is constant. 
\end{enumerate}
\end{enumerate}
\end{theorem}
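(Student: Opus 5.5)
Following Brendle's approach to Heintze--Karcher inequalities in sub-static manifolds, adapted to densities (as in the weighted Reilly formula approach of Li--Xia), I would use the weighted Reilly formula. The key input is the weighted sub-static inequality \eqref{substatic} for $V=m\lambda'-\phi'\lambda$, which Lemma \ref{lem2. 4} derives from (C4); note $V>0$ by (C2),(C3).

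\emph{Step 1: the boundary value problem.} In case (1) ($\Sigma$ null-homologous, $\Omega$ the region it bounds in the interior of $M$) I solve
\[
\overline{\lap}_\phi f = (m+1)\lambda'-\phi'\lambda \ \text{ in }\Omega,\qquad f=0\ \text{on }\Sigma .
\]
In case (2), $\partial\Omega=\Sigma\cup(N_0\cap\partial\Omega)$. On $N_0$ I impose the Neumann condition $f_\nu=\lambda(0)$, which produces the extra boundary term in \eqref{hk2}. The right-hand side is chosen because $X=\lambda\partial_r$ is conformal Killing with $\overline{\mathrm{div}}X=(m+1)\lambda'$. Hence $\overline{\mathrm{div}}_\phi X=(m+1)\lambda'-\phi'\lambda$, and $f_0=\int_0^r\lambda$ is a natural comparison function. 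Integrating the equation gives $\int_\Omega[(m+1)\lambda'-\phi'\lambda]\,d\mu_\phi=\int_\Sigma f_\nu\,d\sigma_\phi-\lambda(0)|N_0\cap\partial\Omega|_\phi$, with signs fixed by the outward normals.

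\emph{Step 2: the weighted Reilly formula with weight $V$.} I integrate
\[
V\bigl((\overline\lap_\phi f)^2-|\overline\grad^2 f|^2-\Ric_\phi(\overline\grad f,\overline\grad f)\bigr)
\]
by parts, as in Qiu--Xia / Li--Xia. The interior term involves exactly the tensor in \eqref{substatic}, including the extra term $V\phi'\frac{\lambda'}{\lambda}|\overline\grad f|^2$ that Lemma \ref{lem2. 4} is designed to absorb. On $\Sigma$, where $f=0$, the boundary terms reduce to $-\int_\Sigma V H_\phi f_\nu^2$. On $N_0$ the Neumann condition, together with $V_\nu$ and the second fundamental form of $N_0$ (totally geodesic, since $\lambda'(0)$ enters), must give a term of the right sign; I expect a computation there. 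Combining with $|\overline\grad^2 f|^2\ge(\overline\lap f)^2/(m+1)$, adjusted for the drift via the splitting $\overline\lap_\phi f=\overline\lap f-\phi'f_r$ and the inequality $(a+b)^2\le\frac{a^2}{1/(m+1)}+\ldots$ (this is why $V$ has the coefficient $m$ rather than $m+1$), I aim for
\[
\int_\Omega \frac{((m+1)\lambda'-\phi'\lambda)^2}{V}\cdot(\text{const})\,d\mu_\phi\lesssim\int_\Sigma VH_\phi f_\nu^2\,d\sigma_\phi .
\]
Cauchy--Schwarz against $\int_\Sigma V/H_\phi$ then yields \eqref{hk1} and \eqref{hk2}.

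\emph{Main obstacle.} The hardest step is the algebraic inequality in Step 2 with the drift. The aim is that
\[
V\Bigl[(\overline\lap_\phi f)^2-|\overline\grad^2 f|^2\Bigr]+(\text{terms from }\overline\grad V)
\]
be bounded below by $\frac1{V}\bigl[(m+1)\lambda'-\phi'\lambda\bigr]^2$ minus a multiple of $(\overline\lap_\phi f)\cdot(\cdots)$. I would first try the ansatz that equality holds for $f=f_0+c$, and verify the computation along that model.

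\emph{Equality.} If equality holds, then $\overline\grad^2 f$ is proportional to $\bar g$ and equality holds in \eqref{substatic} applied to $\overline\grad f$. Either $\phi'\neq0$ somewhere on $\Sigma$, which forces $\overline\grad f$ radial and $\Sigma$ a level set of $r$, i.e. alternative (1). Or $\phi$ is constant near $\Sigma$, and the classical argument makes $\Sigma$ umbilical, i.e. alternative (2). Case (C1') with (C5) guarantees smoothness at the origin, so the same argument proves Theorem \ref{thm1. 2}.
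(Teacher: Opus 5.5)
You take a Reilly-formula route, whereas the paper follows Brendle's flow. The paper evolves $\Sigma$ by $\partial_tF=-V\nu$, the inward unit-speed normal flow of $\widehat g=V^{-2}\bar g$, and gets $\partial_t(H_\phi/V)\ge|A|^2-\phi'\lambda'/\lambda$ from \eqref{substatic}. Lemma~\ref{lem:weighted-mean-curvature-estimate} turns this into $\partial_t(V/H_\phi)\le-\lambda'V$. Integrating $\partial_t\bigl(\tfrac{V}{H_\phi}J_\phi\bigr)\le-VWJ_\phi$ with the coarea formula gives the inequality, and the $N_0$ term comes from $V/H_\phi=\lambda$ on slices.

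A Reilly proof can work, but as you set it up Step~2 fails, and the inequality you postpone as the ``main obstacle'' is the whole argument. There are three concrete problems.

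(i) For the plain problem $\overline\Delta_\phi f=W$, integrating $V\bigl((\overline\Delta_\phi f)^2-|\overline\nabla^2f|^2\bigr)$ by parts with the weighted Bochner formula gives the interior term $\int_\Omega\bigl(V\operatorname{Ric}_\phi+\overline\nabla^2V-\overline\Delta_\phi V\,\bar g\bigr)(\overline\nabla f,\overline\nabla f)\,d\mu_\phi$. The derivatives of $V$ carry the opposite sign to \eqref{substatic}. For the anti-Gaussian weight this tensor equals $-\tfrac32V\bar g$, so it cannot be discarded.

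(ii) Integrating your equation controls $\int_\Sigma f_\nu\,d\sigma_\phi$. Cauchy--Schwarz against $\int_\Sigma VH_\phi f_\nu^2\,d\sigma_\phi$ yields $\int_\Sigma V/H_\phi\,d\sigma_\phi$ only from $\int_\Sigma Vf_\nu\,d\sigma_\phi$.

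(iii) Your displayed target runs the wrong way. What is needed is the \emph{upper} bound $\int_\Sigma VH_\phi f_\nu^2\,d\sigma_\phi\le\int_\Omega W\,d\mu_\phi$.

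The missing ingredient is Li--Xia's modified operator. Set $\mathcal Lf=\overline\Delta_\phi f-\frac{\overline\Delta_\phi V}{V}f$, $Z=\overline\nabla f-\frac fV\overline\nabla V$, and $T=\overline\Delta_\phi V\,\bar g-\overline\nabla^2V+V\operatorname{Ric}_\phi$. Solve $\mathcal Lf=W/V$ in $\Omega$ with $f=0$ on $\Sigma$; this is uniquely solvable because $\mathcal LV=0$ and $V>0$ on $\overline\Omega$. Then $\int_\Sigma Vf_\nu\,d\sigma_\phi=\int_\Omega V\mathcal Lf\,d\mu_\phi=\int_\Omega W\,d\mu_\phi$. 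The modified identity reads $\int_\Omega V\bigl((\mathcal Lf)^2-|\overline\nabla^2f-\frac fV\overline\nabla^2V|^2\bigr)d\mu_\phi=\int_\Omega T(Z,Z)\,d\mu_\phi+\int_\Sigma VH_\phi f_\nu^2\,d\sigma_\phi$, and \eqref{substatic} gives $T(Z,Z)\ge-V\phi'\frac{\lambda'}{\lambda}|Z|^2$. The drift is absorbed by the ambient analogue of Lemma~\ref{lem:weighted-mean-curvature-estimate},
\[
(\mathcal Lf)^2\le\Bigl(\bigl|\overline\nabla^2f-\tfrac fV\overline\nabla^2V\bigr|^2-\phi'\tfrac{\lambda'}{\lambda}|Z|^2\Bigr)\frac{W}{\lambda'}.
\]
Hence $\int_\Sigma VH_\phi f_\nu^2\le\int_\Omega\frac{V^2}{W}(\mathcal Lf)^2=\int_\Omega W$. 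Through $W-\lambda'=V$, this is where the coefficient $m$ in $V$ comes from. Cauchy--Schwarz then gives \eqref{hk1}, which settles part (1).

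Part (2) remains unaddressed, for three reasons.
\begin{itemize}
\item $N_0$ is not totally geodesic: since $\lambda(0),\lambda'(0)>0$, it is umbilic with principal curvatures $\lambda'(0)/\lambda(0)$ and is mean-concave as seen from $\Omega$.
\item Since $V>0$ on $N_0$, the Reilly boundary terms there have the unfavorable sign.
\item With the correct operator, the flux through $N_0$ is $\int_{N_0}(Vf_\nu-fV_\nu)\,d\sigma_\phi$, so a Neumann condition alone does not produce $\lambda(0)\int_{N_0\cap\partial\Omega}d\sigma_\phi$.
\end{itemize}
Your equality analysis would likewise have to be carried out for $Z$ rather than $\overline\nabla f$.
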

Note that when $\phi$ is a constant, then Theorems \ref{cmc-rigidity}, \ref{thm1. 2} and \ref{thm1. 2'} are the celebrated result by Brendle \cite{Brendle2013}.
The Heintze--Karcher inequality originates from the comparison theorem of
Heintze and Karcher \cite{HeintzeKarcher1978}. A major extension has been made by
Brendle \cite{Brendle2013}, who established Heintze--Karcher-type
inequality for substatic warped product manifolds and applied it to Alexandrov-type theorem in certain spaces. The key innovation in Brendle's proof is a normal flow constructed with respect to a
conformal metric.
Li and the second-named author \cite{LiXia2019} subsequently established a generalized Reilly-type formula, obtaining Heintze--Karcher and Minkowski-type inequalities in general substatic manifolds, which leads to a new proof of Brendle's result. The equality case of the sub-static
Heintze--Karcher inequality and its rigidity consequences were further
investigated by Borghini, Fogagnolo, and Pinamonti
\cite{BorghiniFogagnoloPinamonti2024}. 
We also mention a work by Li, Wei and Xu \cite{LiWeiXu2025} on new Heintze--Karcher-type inequality in
sub-static warped product manifolds. 
Brendle's idea has been also applied to hypersurfaces with free boundary or capillary boundary, see the work by Jia, Wang, Zhang and the second-named author \cite{MR4562813,MR4817296,MR4805880}.

 The idea of our proof of Theorems \ref{thm1. 2} and \ref{thm1. 2'} are based on Brendle's idea~\cite{Brendle2013}.
The key observation is that, the quantity $\int_\Sigma\frac{V}{H_\phi}\,d\sigma_\phi$ satisfies certain monotonicity property for the normal flow with respect to the conformal metric $\widehat{g}=V^{-2}\overline{g}$, when $(M^{m+1},\bar g, \mu)$ satisfies the weighted sub-static conditon \eqref{substatic}.




The rest of the paper is organized as follows.  In Section \ref{subsec:2}, we develop the weighted
geometry of warped products and establish a weighted sub-static inequality
controlled by the compatibility quantity $C_{\lambda,\phi}$. 
In Section \ref{subsec:3}, we study the inward normal flow associated with the conformal metric
$\widehat{g}=V^{-2}\overline{g}$
and derive an evolution inequality for $H_\phi/V$, and use a weighted
inequality to obtain the required monotonicity. 
In Section \ref{subsec:4}, we integrate this inequality along the flow and apply the
weighted coarea formula. This yields the weighted Heintze--Karcher inequality in
Theorems \ref{thm1. 2} and \ref{thm1. 2'}. 
Finally, in the anti-Gaussian setting, a weighted Minkowski
identity for $\lambda$-self-expanders implies equality in associated Heintze-Karcher inequality. 
The rigidity statement then shows that the hypersurface is totally umbilical,
and the $\lambda$-self-expander equation forces its center to be the origin. 
This proves Theorem \ref{thm1. 1}.
In Appendix A, we state the corresponding result in space forms and indicate interesting weight satisfying the required conditions.

\

    \noindent{\bf Addendum.} We note that  Florian Johne and Lauro Silini \cite{JohneSilini2026} have simultaneously and independently obtained  similar results via a related technique.

\

\section{Warped‑product manifolds with radial density}\label{subsec:2}

Let $(N^m,g_N)$ be a closed Riemannian manifold, and consider the warped
product manifold
\[
(M^{m+1},\bar g)
=
\bigl(N\times[0,\bar r),\,dr^2+\lambda(r)^2g_N\bigr),
\]
where the warping function $\lambda$ satisfies
\begin{equation}\label{lambda}
\lambda\in C^\infty([0,\bar r)),
\qquad
\lambda(r)>0,
\qquad
\lambda'(r)>0
\quad\text{for all }r\in[0,\bar r).
\end{equation}
Unless otherwise specified, $\langle\cdot,\cdot\rangle$ denotes the inner
product with respect to the ambient metric $\bar g$. Its restriction to
$T\Sigma$ coincides with the metric induced on $\Sigma$.

Assume that $N$ satisfies the Ricci curvature lower bound
\begin{equation}\label{Ric}
\operatorname{Ric}_{N}\ge (m-1)\kappa_N g_N
\end{equation}
for some constant $\kappa_N\in\mathbb R$. Define
\begin{equation}\label{T}
\mathcal T_{\lambda,\kappa_N}(r)
:=
\frac{
(m-1)\kappa_N-\lambda(r)\lambda''(r)
-(m-1)(\lambda'(r))^2
}{\lambda(r)^2}.
\end{equation}
When $N=\mathbb S^m$ and $\kappa_N=1$, this quantity becomes
\[
\mathcal T_{\lambda,1}
=
\frac{(m-1)\bigl(1-(\lambda')^2\bigr)-\lambda\lambda''}{\lambda^2},
\]
which is precisely the curvature quantity arising in the rotationally
symmetric setting.

Let $\phi=\phi(r)$ be a smooth radial potential. We equip $M$ and a
hypersurface $\Sigma\subset M$ with the weighted measures
\[
d\mu_\phi:=e^{-\phi}d\mu_{\bar g},
\qquad
d\sigma_\phi:=e^{-\phi}d\sigma,
\]
respectively. Throughout, we assume that
\begin{equation}\label{phi}
\phi'(r)\le 0
\qquad\text{for all }r\in[0,\bar r).
\end{equation}
If $r=0$ is a pole of the warped product, one additionally requires
$\phi'(0)=0$.

The ambient weighted Laplacian, or Bakry--Émery operator, and the weighted
Laplacian on $\Sigma$ are defined, respectively, by
\[
\overline{\Delta}_\phi f
=
\overline{\Delta}f
-
\bigl\langle\overline{\nabla}\phi,\overline{\nabla}f\bigr\rangle,
\qquad
\Delta_\phi f
=
\Delta f-\langle\nabla\phi,\nabla f\rangle.
\]
Here $\overline{\nabla}$ and $\overline{\Delta}$ denote the ambient gradient
and Laplacian on $M$, whereas $\nabla$ and $\Delta$ denote the tangential
gradient and Laplace--Beltrami operator on $\Sigma$.

For every smooth vector field $X$ on $M$, its weighted divergence is defined
by
\[
\operatorname{div}_{\phi}X
=
\operatorname{div}X-\langle\overline{\nabla}\phi,X\rangle.
\]
The weighted divergence theorem then takes the form
\begin{equation}\label{eq:weighted-divergence}
\int_{\Omega}\operatorname{div}_{\phi}X\,d\mu_\phi
=
\int_{\partial\Omega}\langle X,\nu\rangle\,d\sigma_\phi.
\end{equation}

The weighted mean curvature of $\Sigma$ with respect to the potential $\phi$
is given by
\[
H_\phi
=
H-\langle\overline{\nabla}\phi,\nu\rangle,
\]
and the Bakry--Émery Ricci tensor of the ambient weighted manifold is
\[
\operatorname{Ric}_\phi
=
\overline{\operatorname{Ric}}+\overline{\nabla}^2\phi,
\]
where $\overline{\operatorname{Ric}}$ denotes the Ricci tensor of
$(M^{m+1},\bar g)$. We say that $\Sigma$ is a
\emph{$\phi$-constant-mean-curvature} (or \emph{$\phi$-CMC}) hypersurface if
$H_\phi$ is constant on $\Sigma$.

Define
\begin{equation}\label{V}
V(r):=m\lambda'(r)-\phi'(r)\lambda(r)
\end{equation}
and
\begin{equation}\label{W}
W(r):=(m+1)\lambda'(r)-\phi'(r)\lambda(r).
\end{equation}
In view of \eqref{lambda} and \eqref{phi}, we have
\[
V>0,
\qquad
W>0
\quad\text{on }[0,\bar r).
\]

Finally, define the compatibility quantity
\begin{equation}\label{C-lambda}
\mathcal C_{\lambda,\phi}
:=
V''
+
\left((m-1)\frac{\lambda'}{\lambda}-\phi'\right)V'
+
\mathcal T_{\lambda,\kappa_N}V
+
2\phi'\frac{\lambda'}{\lambda}V.
\end{equation}
Our main structural assumption is
\begin{equation}\label{C}
\mathcal C_{\lambda,\phi}\ge0
\qquad\text{on }[0,\bar r).
\end{equation}

With the preceding notation and structural assumptions in place, we first
establish several technical lemmas that will be used repeatedly in the proofs
of the main results.

\begin{lemma}\label{lem:weighted-divergence-X}
Let
\[
X=\lambda(r)\partial_r.
\]
Then
\[
\operatorname{div}_{\phi}X
:=
\operatorname{div}_{\bar g}X
-
\langle\overline{\nabla}\phi,X\rangle
=
W(r).
\]
\end{lemma}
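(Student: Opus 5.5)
We compute $\operatorname{div}_{\bar g}X$ directly in the warped product, using the fact that $X=\lambda(r)\partial_r$ is a closed conformal vector field, and then subtract the weight term. The standard identity is $\overline{\nabla}_Y X=\lambda'(r)\,Y$ for every tangent vector $Y$; tracing it gives $\operatorname{div}_{\bar g}X=(m+1)\lambda'(r)$.

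To verify the identity, we work in local coordinates $(r,\theta^1,\dots,\theta^m)$. There the volume form is $d\mu_{\bar g}=\lambda(r)^m\,dr\wedge d\mu_{g_N}$, so for a purely radial field $f(r)\partial_r$ we have
\[
\operatorname{div}_{\bar g}\bigl(f\partial_r\bigr)=\lambda^{-m}\partial_r\bigl(\lambda^m f\bigr)=f'+m\frac{\lambda'}{\lambda}f.
\]
Taking $f=\lambda$ yields $\lambda'+m\lambda'=(m+1)\lambda'$. Equivalently, we may use $\overline{\nabla}_{\partial_r}\partial_r=0$ together with the fact that the second fundamental form of the slices $N\times\{r\}$ is $\frac{\lambda'}{\lambda}$ times their induced metric.

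For the weight term, $\phi$ is radial, so $\overline{\nabla}\phi=\phi'(r)\partial_r$. Since $\bar g(\partial_r,\partial_r)=1$, this gives $\langle\overline{\nabla}\phi,X\rangle=\phi'(r)\lambda(r)$. Combining the two computations,
\[
\operatorname{div}_\phi X=(m+1)\lambda'-\phi'\lambda=W(r),
\]
which is exactly \eqref{W}.

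The only point needing care is the case (C1'), where $r=0$ is a pole and the coordinate computation degenerates. Away from $r=0$ the formula holds by the computation above. At the pole, $X=\lambda(r)\partial_r$ with $\lambda=r\eta(r^2)$ is a smooth vector field: near the origin it equals $\eta(|x|^2)\,x$ in normal coordinates. Both $\operatorname{div}_\phi X$ and $W$ are therefore continuous, and the identity extends to $r=0$ by continuity. This step is routine and causes no real difficulty.
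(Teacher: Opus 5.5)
Your proposal is correct and is essentially the paper's argument. The paper derives $\overline{\nabla}X=\lambda'\bar g$ from $\overline{\nabla}_{\partial_r}\partial_r=0$ and $\overline{\nabla}_Y\partial_r=\frac{\lambda'}{\lambda}Y$, traces it, and treats the weight term exactly as you do; your volume-form computation $\lambda^{-m}\partial_r(\lambda^m\lambda)=(m+1)\lambda'$ is an equivalent check, and your continuity remark at the pole under (C1') is a harmless addition the paper leaves implicit.
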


\begin{proof}
Let $Y$ be any vector field tangent to the slice
\[
N_r=N\times\{r\}.
\]
The connection formulas for the warped product metric yield
\[
\overline{\nabla}_{\partial_r}\partial_r=0,
\qquad
\overline{\nabla}_{Y}\partial_r
=
\frac{\lambda'}{\lambda}Y.
\]
Therefore,
\[
\overline{\nabla}_{\partial_r}
\bigl(\lambda\partial_r\bigr)
=
\lambda'\partial_r,
\]
and
\[
\overline{\nabla}_{Y}
\bigl(\lambda\partial_r\bigr)
=
\lambda\,\overline{\nabla}_{Y}\partial_r
=
\lambda'Y.
\]
It follows that
\[
\overline{\nabla}X=\lambda'\bar g.
\]
Taking the trace with respect to $\bar g$, we obtain
\[
\operatorname{div}_{\bar g}X=(m+1)\lambda'(r).
\]
Since $\phi=\phi(r)$, we also have
\[
\overline{\nabla}\phi=\phi'(r)\partial_r,
\]
and hence
\[
\langle\overline{\nabla}\phi,X\rangle
=
\phi'(r)\lambda(r).
\]
Consequently,
\[
\operatorname{div}_{\phi}X
=
(m+1)\lambda'(r)-\phi'(r)\lambda(r)
=
W(r).
\]
\end{proof}

\begin{lemma}[Weighted Minkowski identity]\label{lem:weighted-Minkowski}
Let
\[
L(r):=\int_0^r\lambda(\tau)\,d\tau,
\qquad
X:=\overline{\nabla}L=\lambda(r)\partial_r.
\]
Then
\[
\operatorname{div}_{\Sigma,\phi}X^\top
=
V-H_\phi\langle X,\nu\rangle,
\]
where
\[
\operatorname{div}_{\Sigma,\phi}Y
:=
\operatorname{div}_\Sigma Y-\langle\nabla\phi,Y\rangle
\]
for every tangent vector field $Y$ on $\Sigma$. Consequently,
\[
\int_\Sigma V\,d\sigma_\phi
=
\int_\Sigma H_\phi\langle X,\nu\rangle\,d\sigma_\phi.
\]
\end{lemma}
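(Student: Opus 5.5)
The plan is to compute the tangential divergence of $X^\top$ on $\Sigma$ directly from the identity $\overline{\nabla}X=\lambda'\bar g$ established in the proof of Lemma \ref{lem:weighted-divergence-X}, and then correct for the weight. Write $X=X^\top+\langle X,\nu\rangle\nu$ along $\Sigma$. For a local orthonormal frame $\{e_i\}_{i=1}^m$ of $T\Sigma$ we have
\[
\operatorname{div}_\Sigma X
=\sum_{i=1}^m\langle\overline{\nabla}_{e_i}X,e_i\rangle
=m\lambda'(r).
\]
On the other hand, $\operatorname{div}_\Sigma(\langle X,\nu\rangle\nu)=\langle X,\nu\rangle\operatorname{div}_\Sigma\nu=\langle X,\nu\rangle H$, with the sign convention for $H$ fixed so that $H=\operatorname{div}_\Sigma\nu$ (consistent with $H_\phi=H-\langle\overline{\nabla}\phi,\nu\rangle$ being the first variation quantity). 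Hence
\[
\operatorname{div}_\Sigma X^\top=m\lambda'-H\langle X,\nu\rangle.
\]

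Next we treat the weight term. Since $\phi=\phi(r)$, we have $\nabla\phi=(\overline{\nabla}\phi)^\top=\phi'(\partial_r)^\top$. Therefore
\[
\langle\nabla\phi,X^\top\rangle=\langle\overline{\nabla}\phi,X\rangle-\langle\overline{\nabla}\phi,\nu\rangle\langle X,\nu\rangle
=\phi'\lambda-\langle\overline{\nabla}\phi,\nu\rangle\langle X,\nu\rangle .
\]
Subtracting this from the previous identity gives
\[
\operatorname{div}_{\Sigma,\phi}X^\top=m\lambda'-\phi'\lambda-\bigl(H-\langle\overline{\nabla}\phi,\nu\rangle\bigr)\langle X,\nu\rangle=V-H_\phi\langle X,\nu\rangle .
\]

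For the integral identity, note that for a tangent field $Y$ on the closed hypersurface $\Sigma$ we have $\operatorname{div}_{\Sigma,\phi}Y\,e^{-\phi}=\operatorname{div}_\Sigma(e^{-\phi}Y)$. Integrating over $\Sigma$ and applying the divergence theorem on $\Sigma$, which has no boundary, gives $\int_\Sigma\operatorname{div}_{\Sigma,\phi}X^\top\,d\sigma_\phi=0$. This yields the stated formula.

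The only delicate point is the sign convention for $H$ and $\nu$. The calculation requires $H=\operatorname{div}_\Sigma\nu$, and this must agree with the orientation under which $H_\phi$ is the first variation of weighted area. At a pole, in case (C1'), $X=\lambda\partial_r$ extends smoothly since $X=\overline{\nabla}L$ with $L$ smooth, so no additional issue arises there.
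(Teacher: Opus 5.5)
Your proposal is correct and follows the paper's proof essentially step for step. Both use $\overline{\nabla}X=\lambda'\bar g$ and the decomposition $X=X^\top+\langle X,\nu\rangle\nu$ with $H=\operatorname{div}_\Sigma\nu$ to get $\operatorname{div}_\Sigma X^\top=m\lambda'-H\langle X,\nu\rangle$. Both then subtract $\langle\nabla\phi,X^\top\rangle=\phi'\lambda-\langle\overline{\nabla}\phi,\nu\rangle\langle X,\nu\rangle$ and integrate over the closed hypersurface $\Sigma$ using the weighted divergence theorem.
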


\begin{proof}
Let $\{e_1,\ldots,e_m\}$ be a local orthonormal frame on $\Sigma$. Since
\[
X=\lambda\partial_r
\]
is a conformal vector field satisfying
\[
\overline{\nabla}X=\lambda'\bar g,
\]
we have
\[
\langle\overline{\nabla}_{e_i}X,e_i\rangle=\lambda'
\qquad\text{for }i=1,\ldots,m.
\]

Decompose $X$ along $\Sigma$ as
\[
X=X^\top+\langle X,\nu\rangle\nu.
\]
Then
\[
\begin{aligned}
\operatorname{div}_\Sigma X^\top
&=
\sum_{i=1}^m
\left\langle\overline{\nabla}_{e_i}X^\top,e_i\right\rangle\\
&=
\sum_{i=1}^m
\left\langle
\overline{\nabla}_{e_i}
\bigl(X-\langle X,\nu\rangle\nu\bigr),
e_i
\right\rangle\\
&=
\sum_{i=1}^m
\langle\overline{\nabla}_{e_i}X,e_i\rangle
-
\langle X,\nu\rangle
\sum_{i=1}^m
\langle\overline{\nabla}_{e_i}\nu,e_i\rangle\\
&=
m\lambda'-H\langle X,\nu\rangle.
\end{aligned}
\]
Here we have used the convention
\[
H=\sum_{i=1}^m\langle\overline{\nabla}_{e_i}\nu,e_i\rangle.
\]

Since $\phi=\phi(r)$, we have
\[
\overline{\nabla}\phi=\phi'(r)\partial_r.
\]
Moreover, using
\[
X^\top=X-\langle X,\nu\rangle\nu,
\]
we obtain
\[
\begin{aligned}
\langle\nabla\phi,X^\top\rangle
&=
\langle\overline{\nabla}\phi,X^\top\rangle\\
&=
\langle\overline{\nabla}\phi,X\rangle
-
\langle\overline{\nabla}\phi,\nu\rangle\langle X,\nu\rangle\\
&=
\phi'\lambda
-
\langle\overline{\nabla}\phi,\nu\rangle\langle X,\nu\rangle.
\end{aligned}
\]
Therefore,
\[
\begin{aligned}
\operatorname{div}_{\Sigma,\phi}X^\top
&=
\operatorname{div}_\Sigma X^\top
-
\langle\nabla\phi,X^\top\rangle\\
&=
m\lambda'-H\langle X,\nu\rangle
-\phi'\lambda
+\langle\overline{\nabla}\phi,\nu\rangle\langle X,\nu\rangle\\
&=
m\lambda'-\phi'\lambda
-
\bigl(H-\langle\overline{\nabla}\phi,\nu\rangle\bigr)
\langle X,\nu\rangle\\
&=
V-H_\phi\langle X,\nu\rangle.
\end{aligned}
\]

Finally, since $\Sigma$ is closed, the weighted divergence theorem on
$\Sigma$ implies that
\[
\int_\Sigma \operatorname{div}_{\Sigma,\phi}X^\top\,d\sigma_\phi=0.
\]
Hence,
\[
0
=
\int_\Sigma
\left(V-H_\phi\langle X,\nu\rangle\right)
\,d\sigma_\phi,
\]
which proves that
\[
\int_\Sigma V\,d\sigma_\phi
=
\int_\Sigma H_\phi\langle X,\nu\rangle\,d\sigma_\phi.
\]
\end{proof}

\begin{lemma}\label{lem:weighted-mean-curvature-estimate}
Assume that \(\phi(r)\) and \(\lambda(r)\) satisfy (C1)-(C4) or (C1') and (C2)-(C5).Let $\Sigma\subset M$ be a hypersurface with unit normal $\nu$, second
fundamental form $A$, and mean curvature $H$. Then, pointwise on $\Sigma$,
\begin{equation}\label{eq:Hphi-estimate}
H_\phi^2
\le
\left(
|A|^2-\phi'\frac{\lambda'}{\lambda}
\right)
\frac{V}{\lambda'}.
\end{equation}
\end{lemma}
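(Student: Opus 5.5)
The plan is to prove \eqref{eq:Hphi-estimate} pointwise, using only the trace inequality $|A|^2\ge H^2/m$, the sign condition (C3), and a weighted Cauchy--Schwarz inequality. The splitting is chosen so that the two weights reproduce exactly the factor $V/\lambda'$. Since $\phi=\phi(r)$, we have $\overline\nabla\phi=\phi'\partial_r$. Set $u:=\langle\partial_r,\nu\rangle$, so that $|u|\le1$ and
\[
H_\phi=H-\phi' u=H+(-\phi')u .
\]
By (C2) and (C3), and working at a point with $r>0$ so that $\lambda>0$, introduce the nonnegative quantities
\[
a:=-\phi'\frac{\lambda}{\lambda'}\ge0,\qquad b:=-\phi'\frac{\lambda'}{\lambda}\ge0 .
\]
These satisfy $ab=(\phi')^2$ and $m+a=V/\lambda'$, by the definition \eqref{V}.

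First take a point where $\phi'\neq0$, so that $a>0$. Apply Cauchy--Schwarz in the form $(x+y)^2\le\bigl(\tfrac{x^2}{m}+\tfrac{y^2}{a}\bigr)(m+a)$ with $x=H$ and $y=-\phi'u$. This gives
\[
H_\phi^2\le\Bigl(\frac{H^2}{m}+\frac{(\phi')^2u^2}{a}\Bigr)(m+a)
=\Bigl(\frac{H^2}{m}+b\,u^2\Bigr)\frac{V}{\lambda'} .
\]
Now use $H^2/m\le|A|^2$, together with $u^2\le1$ and $b\ge0$. The right-hand side is then at most
\[
\Bigl(|A|^2-\phi'\frac{\lambda'}{\lambda}\Bigr)\frac{V}{\lambda'},
\]
which is \eqref{eq:Hphi-estimate}.

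At points where $\phi'=0$, we have $H_\phi=H$ and $V/\lambda'=m$, so the claim reduces directly to $H^2\le m|A|^2$.

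The only delicate point, and the step I expect to need care, is well-definedness of the terms in case (C1'). If $\Sigma$ meets the pole $r=0$, then $\lambda'/\lambda$ blows up. However, (C5) gives $\phi'(r)=O(r)$ while $\lambda\sim r$, so $\phi'\lambda'/\lambda$ extends smoothly, and $V/\lambda'$ is smooth as well. The inequality therefore passes to $r=0$ by continuity. In case (C1), $\lambda>0$ everywhere and no issue arises.
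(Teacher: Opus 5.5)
Your proof is correct and is essentially the paper's argument: a weighted Cauchy--Schwarz inequality with weights chosen so that one factor becomes $V/\lambda'$, combined with $\langle\partial_r,\nu\rangle^2\le 1$ and the (C1')/(C5) regularity remark at the pole. The only cosmetic difference is that the paper merges your two steps ($H^2\le m|A|^2$ and the two-term weighted inequality) into a single Cauchy--Schwarz in $\mathbb R^{m+1}$ applied to $(\kappa_1,\dots,\kappa_m,\sqrt{-\phi'\lambda'/\lambda})$, which avoids dividing by $a$ and hence your separate treatment of points where $\phi'=0$.
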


\begin{proof}
Define
\[
b(r):=-\frac{\phi'(r)}{\lambda(r)}\ge0.
\]
Since
\[
\overline{\nabla}\phi=\phi'(r)\partial_r
\qquad\text{and}\qquad
X=\lambda(r)\partial_r,
\]
we have
\[
\langle\overline{\nabla}\phi,\nu\rangle
=
\frac{\phi'}{\lambda}\langle X,\nu\rangle.
\]
Consequently,
\[
H_\phi
=
H-\langle\overline{\nabla}\phi,\nu\rangle
=
H-\frac{\phi'}{\lambda}\langle X,\nu\rangle
=
H+b\langle X,\nu\rangle.
\]

Let $\kappa_1,\ldots,\kappa_m$ be the principal curvatures of $\Sigma$.
Then
\[
H=\sum_{i=1}^m\kappa_i,
\qquad
|A|^2=\sum_{i=1}^m\kappa_i^2.
\]
Applying the Cauchy--Schwarz inequality to the vectors
\[
\bigl(\kappa_1,\ldots,\kappa_m,\sqrt{b\lambda'}\bigr)
\]
and
\[
\left(
1,\ldots,1,
\sqrt{\frac{b}{\lambda'}}\langle X,\nu\rangle
\right),
\]
we obtain
\[
H_\phi^2=
\bigl(H+b\langle X,\nu\rangle\bigr)^2
\le
\bigl(|A|^2+b\lambda'\bigr)
\left(
m+\frac{b}{\lambda'}\langle X,\nu\rangle^2
\right).
\]
Under \emph{(C1')} and \emph{(C5)}, the function
$
b=-\frac{\phi'}{\lambda}
$
and
$
\sqrt{\frac{b}{\lambda'}}
$
is well-defined at $r=0$. Since $\langle X,\nu\rangle^2\le |X|^2$ and
\[
|A|^2+b\lambda'\ge0,
\]
it follows that
\[
\begin{aligned}
H_\phi^2
&\le
\bigl(|A|^2+b\lambda'\bigr)
\left(
m+\frac{b}{\lambda'}|X|^2_{\bar{g}}
\right).
\end{aligned}
\]
Using
\[
b\lambda'
=
-\phi'\frac{\lambda'}{\lambda},
\qquad
|X|^2_{\bar{g}}=\lambda^2,
\]
we find that
\[
\begin{aligned}
m+\frac{b}{\lambda'}|X|^2_{\bar{g}}
=
m-\frac{\phi'}{\lambda\lambda'}\lambda^2
=\frac{V}{\lambda'}.
\end{aligned}
\]
Therefore,
\[
H_\phi^2
\le
\left(
|A|^2-\phi'\frac{\lambda'}{\lambda}
\right)
\frac{V}{\lambda'},
\]
which completes the proof. 
\end{proof}

\begin{lemma}
Let \(\nu\) be a unit vector in \(TM\), and  $\operatorname{Ric}_{N}\ge (m-1)\kappa_N g_N$. Then
\[
\operatorname{Ric}_{\phi}(\nu,\nu)
\ge
-\langle\nu,\partial_r\rangle^2\,m\frac{\lambda''}{\lambda}
+
(1-\langle\nu,\partial_r\rangle^2)\mathcal T_{\lambda,\kappa_N}
+
\langle\nu,\partial_r\rangle^2\phi''
+
(1-\langle\nu,\partial_r\rangle^2)\frac{\lambda'}{\lambda}\phi'. 
\]
\end{lemma}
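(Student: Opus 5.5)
We split $\operatorname{Ric}_\phi=\overline{\operatorname{Ric}}+\overline{\nabla}^2\phi$ and compute each piece in the orthogonal decomposition of $\nu$ into its radial and slice parts. Write $a:=\langle\nu,\partial_r\rangle$ and $\nu=a\,\partial_r+Y$, where $Y$ is tangent to the slice $N_r$ and $|Y|^2_{\bar g}=1-a^2$. We then evaluate both tensors on $(\nu,\nu)$ and expand bilinearly. The cross terms will vanish, so the whole estimate rests on the diagonal blocks.

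First, the curvature of the warped product $dr^2+\lambda^2g_N$. By the standard O'Neill-type formulas (or a direct computation from the connection formulas $\overline{\nabla}_{\partial_r}\partial_r=0$ and $\overline{\nabla}_Y\partial_r=\frac{\lambda'}{\lambda}Y$ used in Lemma \ref{lem:weighted-divergence-X}), we have:
\[
\overline{\operatorname{Ric}}(\partial_r,\partial_r)=-m\frac{\lambda''}{\lambda},\qquad \overline{\operatorname{Ric}}(\partial_r,Y)=0,
\]
\[
\overline{\operatorname{Ric}}(Y,Y)=\operatorname{Ric}_N(Y,Y)-\Bigl(\frac{\lambda''}{\lambda}+(m-1)\frac{(\lambda')^2}{\lambda^2}\Bigr)|Y|^2_{\bar g}.
\]
Here $\operatorname{Ric}_N$ is evaluated on $Y$ regarded as a vector on $N$. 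Because $\operatorname{Ric}_N$ is scale-invariant and $|Y|^2_{g_N}=\lambda^{-2}|Y|^2_{\bar g}$, the hypothesis \eqref{Ric} gives
\[
\operatorname{Ric}_N(Y,Y)\ge (m-1)\kappa_N\lambda^{-2}|Y|^2_{\bar g}.
\]
Hence $\overline{\operatorname{Ric}}(Y,Y)\ge\mathcal T_{\lambda,\kappa_N}(1-a^2)$ by the definition \eqref{T}. This is the only place the inequality enters, and it is where care is needed. One must apply the Ricci lower bound for $N$ with the metric $g_N$ (not $\lambda^2 g_N$), which is exactly what produces the $\kappa_N/\lambda^2$ term.

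Second, the Hessian of the radial function $\phi(r)$. We have $\overline{\nabla}\phi=\phi'\partial_r$, so
\[
\overline{\nabla}^2\phi(Z_1,Z_2)=\phi''\,dr(Z_1)dr(Z_2)+\phi'\langle\overline{\nabla}_{Z_1}\partial_r,Z_2\rangle.
\]
The same connection formulas give $\overline{\nabla}^2 r=\frac{\lambda'}{\lambda}(\bar g-dr\otimes dr)$. Therefore
\[
\overline{\nabla}^2\phi(\nu,\nu)=\phi''a^2+\phi'\frac{\lambda'}{\lambda}(1-a^2).
\]
This holds as an identity, with no mixed term.

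Adding the two computations yields exactly the claimed lower bound, with equality precisely when $\operatorname{Ric}_N(Y,Y)$ attains its bound. The main (mild) obstacle is the warped-product Ricci computation in the slice direction, including the normalization between $g_N$ and $\bar g$ lengths. Everything else is bookkeeping once the cross terms are seen to vanish. In the pole case (C1'), the formula is understood for $r>0$, and at $r=0$ it follows by continuity, using (C5) so that $\phi'\lambda'/\lambda$ extends smoothly.
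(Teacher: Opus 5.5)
Your proposal is correct and follows essentially the same route as the paper. You split $\operatorname{Ric}_\phi=\overline{\operatorname{Ric}}+\overline{\nabla}^2\phi$, use the warped-product Ricci formulas in the radial and slice directions together with $\operatorname{Ric}_N\ge (m-1)\kappa_N g_N$ (the $g_N$ versus $\bar g$ normalization produces the $\kappa_N/\lambda^2$ term), and add $\overline{\nabla}^2\phi=\phi''\,dr\otimes dr+\phi'\frac{\lambda'}{\lambda}(\bar g-dr\otimes dr)$; your decomposition $\nu=a\,\partial_r+Y$ with explicitly vanishing cross terms is slightly cleaner than the paper's $\nu=\sqrt{a^2}\,\partial_r+\sqrt{1-a^2}\,E$, which uses the vanishing of the mixed terms only implicitly.
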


\begin{proof}
The Ricci tensor of the warped product satisfies
\[
\operatorname{Ric}_{\bar g}(\partial_r,\partial_r)
=
-m\frac{\lambda''}{\lambda}. 
\]
If \(E\) is a unit vector tangent to \(N_r\), then
\[
\operatorname{Ric}_{\bar g}(E,E)
=
\frac{1}{\lambda^2}\operatorname{Ric}_N(\lambda E,\lambda E)
-\frac{\lambda \lambda ''+(m-1)(\lambda')^2}{\lambda^2}. 
\]
Using
\[
\operatorname{Ric}_N\ge(m-1)\kappa_N g_N,
\]
we obtain
\[
\operatorname{Ric}_{\bar g}(E,E)
\ge
\frac{(m-1)\kappa_N-\lambda \lambda ''-(m-1)(\lambda')^2}{\lambda^2}
=
\mathcal T_{\lambda,\kappa_N}. 
\]

Write
\[
\nu=\sqrt {\langle\nu,\partial_r\rangle^2}\,\partial_r+\sqrt{1-\langle\nu,\partial_r\rangle^2}\,E,
\]
where \(E\) is tangent to \(N_r\) and \(|E|=1\). Then
\[
\operatorname{Ric}_{\bar g}(\nu,\nu)
\ge
-\langle\nu,\partial_r\rangle^2\,m\frac{\lambda''}{\lambda}
+
(1-\langle\nu,\partial_r\rangle^2)\mathcal T_{\lambda,\kappa_N}. 
\]
Since
\[
\overline{\grad}^2\phi(\partial_r,\partial_r)=\phi'',
\qquad
\overline{\grad}^2\phi(E,E)=\phi'\frac{\lambda'}{\lambda}=\phi'\frac{\lambda'}{\lambda},
\]
we have
\[
\overline{\grad}^2\phi(\nu,\nu)
=
\langle\nu,\partial_r\rangle^2\phi''+(1-\langle\nu,\partial_r\rangle^2)\phi'\frac{\lambda'}{\lambda}. 
\]
The conclusion follows from
\[
\operatorname{Ric}_{\phi}
=
\operatorname{Ric}_{\bar g}+\overline{\grad}^2\phi. 
\]
\end{proof}

\begin{lemma}\label{lem2. 4}
For every unit vector \(\nu\in TM\), 
$\operatorname{Ric}_{N}\ge (m-1)\kappa_N g_N$, one has
\begin{align}
\overline{\lap}_\phi V
-\overline{\grad}^2V(\nu,\nu)
+V\operatorname{Ric}_{\phi}(\nu,\nu)
+V\phi'\frac{\lambda'}{\lambda}
\nonumber  \ge
(1-\langle\nu,\partial_r\rangle^2)\mathcal C_{\lambda,\phi}. 
\label{eq:weighted-substatic}
\end{align}
In particular, if
$\mathcal C_{\lambda,\phi}\ge0$,
then
\[
\overline{\lap}_\phi V
-\overline{\grad}^2V(\nu,\nu)
+V\operatorname{Ric}_{\phi}(\nu,\nu)
+V\phi'\frac{\lambda'}{\lambda}
\ge0. 
\]
\end{lemma}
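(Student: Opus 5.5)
Since $V=V(r)$ is radial, the plan is a direct pointwise computation. Decompose the unit vector as $\nu=a\,\partial_r+\sqrt{1-a^2}\,E$, with $a=\langle\nu,\partial_r\rangle$ and $E$ a unit vector tangent to $N_r$. Every term on the left-hand side will then be written as a combination of $a^2$ and $1-a^2$.

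First, the Hessian of a radial function in the warped product follows from the connection formulas in the proof of Lemma \ref{lem:weighted-divergence-X}:
\[
\overline{\grad}^2V=V''\,dr^2+V'\frac{\lambda'}{\lambda}\bigl(\bar g-dr^2\bigr).
\]
This gives
\[
\overline{\grad}^2V(\nu,\nu)=a^2V''+(1-a^2)V'\frac{\lambda'}{\lambda},
\qquad
\overline{\lap}V=V''+m\frac{\lambda'}{\lambda}V',
\qquad
\overline{\lap}_\phi V=V''+m\frac{\lambda'}{\lambda}V'-\phi'V'.
\]
Next, the preceding lemma bounds $\operatorname{Ric}_\phi(\nu,\nu)$ from below. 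Multiplying that bound by $V>0$ and adding the remaining terms, the left-hand side becomes at least
\[
(1-a^2)\Bigl[V''+\Bigl((m-1)\tfrac{\lambda'}{\lambda}-\phi'\Bigr)V'+\mathcal T_{\lambda,\kappa_N}V+2\phi'\tfrac{\lambda'}{\lambda}V\Bigr]
+a^2\Bigl[m\tfrac{\lambda'}{\lambda}V'-\phi'V'-m\tfrac{\lambda''}{\lambda}V+\phi''V+\phi'\tfrac{\lambda'}{\lambda}V\Bigr].
\]
To get this split, write $\overline{\lap}_\phi V=(a^2+(1-a^2))\overline{\lap}_\phi V$, and likewise split the term $V\phi'\lambda'/\lambda$. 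The first bracket is exactly $\mathcal C_{\lambda,\phi}$.

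The main step, and the main obstacle, is to show that the $a^2$ bracket vanishes identically. This is where the specific choice $V=m\lambda'-\phi'\lambda$ enters. Using $V'=m\lambda''-\phi''\lambda-\phi'\lambda'$, I expand
\[
\Bigl(m\tfrac{\lambda'}{\lambda}-\phi'\Bigr)V'
\quad\text{and}\quad
\Bigl(-m\tfrac{\lambda''}{\lambda}+\phi''+\phi'\tfrac{\lambda'}{\lambda}\Bigr)V
\]
and check term by term that they cancel. The $m^2\lambda'\lambda''/\lambda$ terms cancel, the $\phi''$ terms give $-m\lambda'\phi''+m\lambda'\phi''=0$, and the mixed terms $\phi'\lambda''$, $\phi'(\lambda')^2/\lambda$, $\phi'\phi''\lambda$ and $(\phi')^2\lambda'$ also cancel in pairs.

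This cancellation is expected on structural grounds. $V$ is essentially the weighted static potential along the radial direction: $V=\operatorname{div}_\phi X-\lambda'$ for the conformal field $X$. So equality in the radial direction is natural, and it can serve as a consistency check on the signs. With the $a^2$ bracket gone, the left-hand side is bounded below by $(1-a^2)\,\mathcal C_{\lambda,\phi}$.

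The second claim then follows at once, since $1-a^2\ge0$. At the pole in case (C1'), the quantities $\lambda'/\lambda$ and $\phi'/\lambda$ are bounded thanks to (C5), and the identity extends there by continuity.
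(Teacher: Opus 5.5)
Your proposal is correct and follows the paper's proof step for step. It uses the same radial Hessian formula and the same lower bound for $\operatorname{Ric}_\phi(\nu,\nu)$ from the preceding lemma, multiplied by $V>0$. It makes the same regrouping into a $(1-\langle\nu,\partial_r\rangle^2)$-part equal to $\mathcal C_{\lambda,\phi}$ and a $\langle\nu,\partial_r\rangle^2$-part that vanishes because $m\lambda'/\lambda-\phi'=V/\lambda$ and $-m\lambda''/\lambda+\phi''+\phi'\lambda'/\lambda=-V'/\lambda$.

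One slip in your closing remark: $\lambda'/\lambda\sim 1/r$ is not bounded at the pole. Nothing depends on this, since the left-hand side is continuous on $M$, so its nonnegativity for $r>0$ carries over to $r=0$.
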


\begin{proof}
Since \(V=V(r)\), we have
\[
\overline{\lap}_\phi V
=
V''
+
(m\frac{\lambda'}{\lambda}-\phi')V'. 
\]
Moreover,
\[
\overline{\grad}^2V(\partial_r,\partial_r)=V'',
\]
and, for every unit vector \(E\) tangent to \(N_r\),
\[
\overline{\grad}^2V(E,E)=\frac{\lambda'}{\lambda}V'. 
\]
Therefore,
\[
\overline{\grad}^2V(\nu,\nu)
=
\langle\nu,\partial_r\rangle^2V''+(1-\langle\nu,\partial_r\rangle^2)\frac{\lambda'}{\lambda}V'. 
\]

Using the preceding Ricci curvature estimate, we obtain
\begin{align*}
&
\overline{\lap}_\phi V
-\overline{\grad}^2V(\nu,\nu)
+V\operatorname{Ric}_{\phi}(\nu,\nu)
+V\phi'\frac{\lambda'}{\lambda}
\\
&\ge
(1-\langle\nu,\partial_r\rangle^2)V''
+
\bigl((m-1+\langle\nu,\partial_r\rangle^2)\frac{\lambda'}{\lambda}-\phi'\bigr)V'
\\
&\quad
+
V
\left[
-\langle\nu,\partial_r\rangle^2\,m\frac{\lambda''}{\lambda}
+
(1-\langle\nu,\partial_r\rangle^2)\mathcal T_{\lambda,\kappa_N}
+
\langle\nu,\partial_r\rangle^2\phi''
+
(2-\langle\nu,\partial_r\rangle^2)\phi'\frac{\lambda'}{\lambda}
\right]. 
\end{align*}
We separate the right-hand side into a \((1-\langle\nu,\partial_r\rangle^2)\)-part and an
\(\langle\nu,\partial_r\rangle^2\)-part:
\begin{align*}
&
\overline{\lap}_\phi V
-\overline{\grad}^2V(\nu,\nu)
+V\operatorname{Ric}_{\phi}(\nu,\nu)
+V\phi'\frac{\lambda'}{\lambda}
\\
&\ge
(1-\langle\nu,\partial_r\rangle^2)
\left[
V''
+
\bigl((m-1)\frac{\lambda'}{\lambda}-\phi'\bigr)V'
+
\mathcal T_{\lambda,\kappa_N}V
+
2\phi'\frac{\lambda'}{\lambda}V
\right]
\\
&\quad
+
\langle\nu,\partial_r\rangle^2
\left[
(m\frac{\lambda'}{\lambda}-\phi')V'
+
\left(
-m\frac{\lambda''}{\lambda}
+\phi''
+\phi'\frac{\lambda'}{\lambda}
\right)V
\right]. 
\end{align*}
Indeed, from
\[
V=m\lambda'-\phi'\lambda,
\]
we obtain
\[
m\frac{\lambda'}{\lambda}-\phi'
=
\frac{m\lambda'-\phi'\lambda}{\lambda}
=
\frac{V}{\lambda}. 
\]
On the other hand,
\begin{align*}
V'=
m\lambda''-\phi''\lambda-\phi'\lambda'
=
-\lambda\left(
-m\frac{\lambda''}{\lambda}
+\phi''
+\phi'\frac{\lambda'}{\lambda}
\right)
\end{align*}
Consequently,
\begin{align*}
&(m\frac{\lambda'}{\lambda}-\phi')V'
+
\left(
-m\frac{\lambda''}{\lambda}
+\phi''
+\phi'\frac{\lambda'}{\lambda}
\right)V
\\
&=
\frac{V}{\lambda}
\left[
-\lambda\left(
-m\frac{\lambda''}{\lambda}
+\phi''
+\phi'\frac{\lambda'}{\lambda}
\right)
\right]
+
\left(
-m\frac{\lambda''}{\lambda}
+\phi''
+\phi'\frac{\lambda'}{\lambda}
\right)V
\\
&=0. 
\end{align*}
Thus,
\[
\overline{\lap}_\phi V
-\overline{\grad}^2V(\nu,\nu)
+V\operatorname{Ric}_{\phi}(\nu,\nu)
+V\phi'\frac{\lambda'}{\lambda}
\ge
(1-\langle\nu,\partial_r\rangle^2)\mathcal C_{\lambda,\phi},
\]
which completes the proof. 
\end{proof}

\section{Weighted Normal Flows under Conformal Metric $\hat{g}=V^{-2}\bar{g}$}\label{subsec:3}

In this section, we investigate the inward normal flow induced by a suitable conformal metric. Recall that for the radial density $\phi=\phi(r)$ with $r=|x|$, we define
\[
V(r):=m\lambda'(r)-\phi'(r)\lambda(r),\quad W(r):=(m+1)\lambda'(r)-\phi'(r)\lambda(r). 
\]
Under assumption (\ref{lambda}) and (\ref{phi}), we have
\[
V(r)\geq m \inf_{x \in \Sigma}{\lambda'(|x|)}>0. 
\]
Hence, $\hat{g}=V^{-2}g$ induces a smooth Riemannian metric on $\overline{\Omega}$. 

The inward normal flow with respect to $\hat{g}$ corresponds to the Euclidean normal flow with speed $V$:
\[
\partial_t F = -V\nu. 
\]
We then derive the evolution equations for the weighted mean curvature $H_\phi$. 

\begin{proposition}
Along the normal flow $\partial_t F=-u\nu$, the weighted mean curvature satisfies the evolution identity
\[
\partial_t H_\phi=\lap_\phi u+u\big(|A|^2+\Ricphi(\nu,\nu)\big). 
\]
\end{proposition}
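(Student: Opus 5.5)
We split $H_\phi = H - \langle\overline\nabla\phi,\nu\rangle$ and compute the time derivative of each piece separately, then add them. The sign conventions are those of the paper: $H=\sum_i\langle\overline\nabla_{e_i}\nu,e_i\rangle$, and the flow is $\partial_t F=-u\nu$.

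\emph{Unweighted pieces.} We first recall the standard first-variation formulas for a normal variation with speed $-u$:
\[
\partial_t\nu=\nabla u,\qquad \partial_t H=\Delta u+u\bigl(|A|^2+\overline{\Ric}(\nu,\nu)\bigr).
\]
We will verify the signs quickly on a round sphere in $\mathbb R^{m+1}$ with outward normal and $u=1$. Then $H=m/R$, the radius decreases, so $\partial_tH=m/R^2=u|A|^2$, which is consistent. For $\partial_t\nu$, we differentiate $\langle\nu,F_*e_i\rangle=0$. This gives $\langle\partial_t\nu,e_i\rangle=-\langle\nu,\overline\nabla_{e_i}(-u\nu)\rangle=e_i(u)$, which confirms $\partial_t\nu=\nabla u$.

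\emph{Density term.} Next we differentiate the density term along the flow:
\[
\partial_t\langle\overline\nabla\phi,\nu\rangle=\overline\nabla^2\phi(\partial_tF,\nu)+\langle\overline\nabla\phi,\partial_t\nu\rangle=-u\,\overline\nabla^2\phi(\nu,\nu)+\langle\nabla\phi,\nabla u\rangle,
\]
where $\nabla\phi$ is the tangential part of $\overline\nabla\phi$.

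\emph{Combination.} Subtracting the density term from $\partial_t H$ gives
\[
\partial_tH_\phi=\Delta u-\langle\nabla\phi,\nabla u\rangle+u\bigl(|A|^2+\overline{\Ric}(\nu,\nu)+\overline\nabla^2\phi(\nu,\nu)\bigr)=\Delta_\phi u+u\bigl(|A|^2+\Ricphi(\nu,\nu)\bigr),
\]
using the definitions of $\Delta_\phi$ and $\Ricphi$ from Section~\ref{subsec:2}.

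The main obstacle is keeping the sign conventions consistent, in particular for $\partial_t\nu$ and the Ricci term in $\partial_tH$. The sphere check above fixes the $|A|^2$ and $\nabla u$ signs. For the ambient curvature term, we would use the Codazzi/Gauss computation $\partial_t h_{ij}=\nabla_i\nabla_j u - u\,h_{ik}h_{kj}+u\,\overline{R}(\nu,e_i,\nu,e_j)$ with the appropriate convention, and take the trace. As a check, in a space form of curvature $\kappa$ shrinking geodesic spheres satisfy $\partial_t H=u(|A|^2+m\kappa)$. This matches $+u\,\overline{\Ric}(\nu,\nu)$.
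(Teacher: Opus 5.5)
Your proposal is correct and follows the paper's proof essentially step for step. Both split $H_\phi$, cite the standard evolution of $H$, derive $\partial_t\nu=\nabla u$ from $\langle\nu,\tau\rangle=0$, compute $\partial_t\langle\overline\nabla\phi,\nu\rangle=-u\,\overline\nabla^2\phi(\nu,\nu)+\langle\nabla\phi,\nabla u\rangle$, and recombine into $\Delta_\phi u$ and $\Ricphi(\nu,\nu)$.

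One small slip is in your side remark: tracing $\partial_t h_{ij}=\nabla_i\nabla_j u-u\,h_{ik}h_{kj}+\dots$ with $g^{ij}$ alone gives $-u|A|^2$. You must also include the metric variation $\partial_t g^{ij}=2u\,h^{ij}$, which contributes $+2u|A|^2$ and restores the $+u|A|^2$ that your sphere check confirms.
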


\begin{proof}
By definition, split the time derivative:
\[
\partial_t H_\phi=\partial_t H-\partial_t\big(\inner{\overline{\grad}\phi}{\nu}\big). 
\]

For a normal flow $\partial_t F=-u\nu$ in a Riemannian manifold, the standard evolution law for mean curvature reads
\begin{equation}
\begin{aligned}
\partial_t H=\lap u+u\big(|A|^2+\Ric(\nu,\nu)\big). 
\end{aligned}
\label{1}
\end{equation}

Since $\inner{\nu}{\nu}\equiv1$, we have $\inner{\partial_t\nu}{\nu}=0$, hence $\partial_t\nu$ is tangent to $\Sigma_t$. 
For any tangent vector field $\tau\in T\Sigma_t$,
\[
\partial_t\inner{\nu}{\tau}=\inner{\partial_t\nu}{\tau}+\inner{\nu}{\overline{\nabla}_\tau\partial_t F}=0. 
\]
Substitute $\partial_t F=-u\nu$:
\[
\inner{\partial_t\nu}{\tau}-\inner{\nu}{\overline{\nabla}_\tau(u\nu)}
=\inner{\partial_t\nu}{\tau}-\grad_\tau u=0. 
\]
This holds for all tangent $\tau$, which yields
\begin{equation}
\begin{aligned}
\partial_t\nu=\grad u. 
\end{aligned}
\label{2}
\end{equation}

Apply Leibniz rule:
\[
\partial_t\inner{\overline{\grad}\phi}{\nu}
=\inner{\partial_t(\overline{\grad}\phi)}{\nu}+\inner{\overline{\grad}\phi}{\partial_t\nu}. 
\]
The function $\phi$ is fixed on the ambient space; only the evaluation point $F_t$ moves. Thus
\[
\partial_t(\overline{\grad}\phi)=\overline{\nabla}_{\partial_t F}\overline{\grad}\phi=\overline{\grad}^2\phi(\partial_t F,\,\cdot)=-u\,\overline{\grad}^2\phi(\nu,\,\cdot). 
\]
Taking pairing with $\nu$:
\[
\inner{\partial_t(\overline{\grad}\phi)}{\nu}=-u\,\overline{\grad}^2\phi(\nu,\nu). 
\]
Using (\ref{2}):
\[
\inner{\overline{\grad}\phi}{\partial_t\nu}=\inner{\overline{\grad}\phi}{\grad u}. 
\]
Combine the two terms:
\begin{equation}
\begin{aligned}
\partial_t\inner{\overline{\grad}\phi}{\nu}
=-u\,\overline{\grad}^2\phi(\nu,\nu)+\inner{\grad\phi}{\grad u}. 
\end{aligned}
\label{3}
\end{equation}

Substitute (\ref{1}) and (\ref{3}) into the split expression of $\partial_t H_\phi$:
\[
\begin{aligned}
\partial_t H_\phi
&=\big(\lap u+u|A|^2+u\Ric(\nu,\nu)\big)
-\Big(-u\,\overline{\grad}^2\phi(\nu,\nu)+\inner{\grad\phi}{\grad u}\Big)\\
&=\lap u-\inner{\grad\phi}{\grad u}
+u|A|^2+u\,\overline{\grad}^2\phi(\nu,\nu)+u\Ric(\nu,\nu). 
\end{aligned}
\]
By definition of the hypersurface weighted Laplacian,
\[
\lap u-\inner{\grad\phi}{\grad u}=\lap_\phi u. 
\]
Recall the Bakry--Émery Ricci tensor $\Ricphi(\nu,\nu)=\Ric(\nu,\nu)+\overline{\grad}^2\phi(\nu,\nu)$. 
Therefore
\[
\partial_t H_\phi=\lap_\phi u+u\big(|A|^2+\Ricphi(\nu,\nu)\big),
\]
which completes the proof. 
\end{proof}

\begin{lemma}\label{lem:3. 1}
Assume that \(\phi(r)\) and \(\lambda(r)\) satisfy (C1)-(C4) or (C1') and (C2)-(C5). Then \(H_\phi/V\) satisfies the pointwise
evolution inequality
\[
\partial_t\left(\frac{H_\phi}{V}\right)
\geq |A|^2-\phi'(r)\frac{\lambda'(r)}{\lambda(r)}. 
\]
Consequently, if
\[
H_\phi(\cdot,0)>0,
\]
then
\[
H_\phi(\cdot,t)>0
\]
for every time for which the flow exists. Hence \(V/H_\phi\) is
well-defined along the flow and satisfies
\[
\partial_t\left(\frac{V}{H_\phi}\right)
\leq -\lambda 'V. 
\]
\end{lemma}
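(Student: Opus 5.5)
We plan to specialize the evolution identity of the preceding Proposition to the speed $u=V$, which gives
\[
\partial_t H_\phi=\lap_\phi V+V\big(|A|^2+\Ricphi(\nu,\nu)\big).
\]
Since $V=V(r)$ is an ambient function, we express its intrinsic weighted Laplacian on $\Sigma_t$ through ambient quantities. The relevant identities are $\lap V=\overline{\lap}V-\overline{\grad}^2V(\nu,\nu)-H\langle\overline{\grad}V,\nu\rangle$ and $\langle\grad\phi,\grad V\rangle=\langle\overline{\grad}\phi,\overline{\grad}V\rangle-\langle\overline{\grad}\phi,\nu\rangle\langle\overline{\grad}V,\nu\rangle$. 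Together they yield
\[
\lap_\phi V=\overline{\lap}_\phi V-\overline{\grad}^2V(\nu,\nu)-H_\phi\langle\overline{\grad}V,\nu\rangle.
\]
Next we add and subtract $V\phi'\frac{\lambda'}{\lambda}$ and invoke Lemma \ref{lem2. 4} (available under (C4)). This gives the key pointwise bound
\[
\partial_t H_\phi\ge V\Big(|A|^2-\phi'\tfrac{\lambda'}{\lambda}\Big)-H_\phi\langle\overline{\grad}V,\nu\rangle .
\]

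To handle $H_\phi/V$ we compute $\partial_t V=\langle\overline{\grad}V,\partial_tF\rangle=-V\langle\overline{\grad}V,\nu\rangle$. The quotient rule then gives
\[
\partial_t\Big(\frac{H_\phi}{V}\Big)=\frac{\partial_tH_\phi}{V}+\frac{H_\phi}{V}\langle\overline{\grad}V,\nu\rangle .
\]
Substituting the bound above, the two terms involving $\langle\overline{\grad}V,\nu\rangle$ cancel exactly, and we are left with $\partial_t(H_\phi/V)\ge |A|^2-\phi'\lambda'/\lambda$. This exact cancellation is the reason for choosing the conformal factor $V^{-2}$. The right-hand side is nonnegative because $\phi'\le0$ by (C3) and $\lambda,\lambda'>0$ (at $r=0$ in case (C1') we use (C5), so that $\phi'/\lambda$ stays bounded). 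Hence $H_\phi/V$ is nondecreasing in $t$ at each point. Since $V>0$, the condition $H_\phi>0$ at $t=0$ persists for as long as the flow exists.

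For the last claim we write
\[
\partial_t(V/H_\phi)=-\frac{V^2}{H_\phi^2}\,\partial_t\Big(\frac{H_\phi}{V}\Big)\le -\frac{V^2}{H_\phi^2}\Big(|A|^2-\phi'\tfrac{\lambda'}{\lambda}\Big).
\]
Lemma \ref{lem:weighted-mean-curvature-estimate} states $H_\phi^2\le (|A|^2-\phi'\lambda'/\lambda)\,V/\lambda'$. Equivalently, $|A|^2-\phi'\lambda'/\lambda\ge \lambda' H_\phi^2/V$, and inserting this gives $\partial_t(V/H_\phi)\le-\lambda' V$.

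The main care point is the first step, namely the correct signs in the hypersurface Laplacian formula with the paper's convention $H=\sum\langle\overline\nabla_{e_i}\nu,e_i\rangle$ and the inward flow $-V\nu$. A sign error there would break the cancellation, so we will verify it against the slice case, where everything is explicit.
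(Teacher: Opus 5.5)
Your proposal is correct and follows essentially the same route as the paper's proof. You specialize the $H_\phi$ evolution to $u=V$, rewrite $\Delta_\phi V=\overline{\Delta}_\phi V-\overline{\nabla}^2V(\nu,\nu)-H_\phi\partial_\nu V$, cancel the $\partial_\nu V$ terms against $\partial_t V=-V\partial_\nu V$ in the quotient rule, invoke Lemma~\ref{lem2. 4}, and conclude with Lemma~\ref{lem:weighted-mean-curvature-estimate}. The only difference is that you add the $V\phi'\lambda'/\lambda$ term before taking the quotient rather than after, and your use of (C5) to justify $-\phi'\lambda'/\lambda\ge 0$ at $r=0$ is cleaner than the paper's, which mistakenly writes $-\phi'(r)/r$.
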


\begin{proof}
We use the decomposition relating ambient and hypersurface weighted Laplacians:
\[
\lap_\phi V = \overline{\lap}_\phi V - \overline{\grad}^2 V(\nu,\nu) - H_\phi\partial_\nu V. 
\]
Insert this into the evolution law for $H_\phi$:
\[
\partial_t H_\phi = \overline{\lap}_\phi V - \overline{\grad}^2 V(\nu,\nu) - H_\phi\partial_\nu V + V|\big(|A|^2+\Ricphi(\nu,\nu)\big). 
\]
So
\[
\partial_t\left(\frac{H_\phi}{V}\right) = \frac{V\partial_t H_\phi - H_\phi\partial_t V}{V^2}. 
\]

By the chain rule
$$
\begin{aligned}
\partial_t V
=\partial_t\big(V\circ F_t\big)
=\big\langle \overline{\nabla}V,\partial_t F_t\big\rangle
=\langle \overline{\nabla}V,-V\nu\rangle
=-V\,\partial_\nu V,
\end{aligned}
$$

Substitute $\partial_t H_\phi$ and $\partial_t V = -V\partial_\nu V$ into the numerator:
$$
\begin{aligned}
V\partial_t H_\phi - H_\phi\partial_t V
&= V\big(\overline{\lap}_\phi V - \overline{\nabla}^2 V(\nu,\nu) - H_\phi\partial_\nu V + V|A|^2 + V\overline{\nabla}^2\phi(\nu,\nu)\big) + H_\phi V \partial_\nu V \\
&= V\big(\overline{\lap}_\phi V - \overline{\nabla}^2 V(\nu,\nu) + V|A|^2 + V\Ricphi(\nu,\nu)\big). 
\end{aligned}
$$

Substituting gives
\[
\partial_t\left(\frac{H_\phi}{V}\right)
=\frac{\overline{\lap}_\phi V - \overline{\nabla}^2 V(\nu,\nu)
+ V|A|^2 + V\Ricphi(\nu,\nu)}{V}. 
\]

The Lemma \ref{lem2. 4} implies that

\[
\partial_t\left(\frac{H_\phi}{V}\right)
\geq |A|^2-\phi'(r)\frac{\lambda'(r)}{\lambda(r)}. 
\]
Moreover, by (\ref{lambda}) and \ref{phi},
\[
-\frac{\phi'(r)}{r}\geq0. 
\]
Hence
\[
\partial_t\left(\frac{H_\phi}{V}\right)\geq0. 
\]
Since \(V>0\), the initial condition \(H_\phi(\cdot,0)>0\) implies
\[
\frac{H_\phi}{V}(\cdot,t)>0,
\]
and consequently
\[
H_\phi(\cdot,t)>0
\]
for all times under consideration. 

We may therefore differentiate the reciprocal. Namely,
\[
\partial_t\left(\frac{V}{H_\phi}\right)
=
\partial_t\left(\frac{H_\phi}{V}\right)^{-1}
=
-\frac{V^2}{H_\phi^2}
\partial_t\left(\frac{H_\phi}{V}\right). 
\]
Using the preceding evolution inequality yields
\[
\partial_t\left(\frac{V}{H_\phi}\right)
\leq
-\frac{V^2}{H_\phi^2}
\left(|A|^2-\phi'(r)\frac{\lambda'(r)}{\lambda(r)}\right). 
\]
Applying Lemma \ref{lem:weighted-mean-curvature-estimate}, we obtain
\[
\partial_t\left(\frac{V}{H_\phi}\right)
\leq -\lambda'V,
\]
which completes the proof. 
\end{proof}

\section{Proofs of Theorems 1.1—1.4}\label{subsec:4}
In this section we establish the main results of the paper. We begin with the proof of Theorem \ref{thm1. 2}.

\begin{theorem}\label{thm:4.1}
Assume that \(\phi(r)\) and \(\lambda(r)\) satisfy (C1') and (C2)-(C5). 
Let  \(\Sigma\subset N\times(0,\bar r)\) be a smooth closed
hypersurface satisfying $H_\phi>0$.
Then
\begin{equation}\label{hk11}
\int_\Sigma\frac{m\lambda'(r)-\phi'(r)\lambda(r)}{H_\phi}\,d\sigma_\phi
\ge
\int_\Omega [(m+1)\lambda'(r)-\phi'(r)\lambda(r)]\,d\mu_\phi. 
\end{equation}
Furthermore, equality in \eqref{hk11} holds if and only if one of the following alternatives holds:
\begin{enumerate}
\item $\Sigma$ is a coordinate slice $N\times\{r_0\}$ for some $r_0\in (0, \bar r)$; or
\item $\Sigma$ is a totally umbilical hypersurface contained in the region where $\phi$ is constant. 
\end{enumerate}
\end{theorem}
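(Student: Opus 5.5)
We follow Brendle's normal-flow argument, using Lemma \ref{lem:3. 1} as the monotonicity input. Let $\Omega$ be the region bounded by $\Sigma$; under (C1') the origin may lie in $\Omega$, which is harmless since $V$ is smooth and positive there. Since $V>0$ on $\overline\Omega$, the metric $\hat g=V^{-2}\bar g$ is smooth on $\overline\Omega$. Let $\hat\rho(x)=\operatorname{dist}_{\hat g}(x,\Sigma)$, and define the map
\[
\Phi:\Sigma\times[0,\infty)\to M,\qquad \Phi(x,t)=\exp^{\hat g}_x(t\,\hat\nu),
\]
where $\hat\nu=-V\nu$ is the inward $\hat g$-unit normal. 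For fixed $x$, $t\mapsto\Phi(x,t)$ is a $\hat g$-unit speed geodesic, and near $\Sigma$ the level sets $\Sigma_t=\{\hat\rho=t\}$ move by $\partial_tF=-V\nu$ in the $\bar g$-picture. Let
\[
A=\{(x,t):\ \hat\rho(\Phi(x,t))=t\},
\]
which is the set of points before the cut locus.

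\textbf{Step 1: control of the Jacobian.} On the interior of $A$, $\Sigma_t$ is smooth near $\Phi(x,t)$ and the computation of Lemma \ref{lem:3. 1} applies pointwise along the geodesic. In particular $H_\phi>0$ persists there, and
\[
\partial_t\frac{V}{H_\phi}\le -\lambda' V .
\]
Let $J(x,t)$ be the Jacobian of $\Phi(\cdot,t)$ with respect to the weighted measure $d\sigma_\phi$. Its evolution is
\[
\partial_t\bigl(e^{-\phi}d\sigma\bigr)=-V H_\phi\,e^{-\phi}d\sigma,
\]
because the first variation of weighted area under speed $-V\nu$ produces exactly $H_\phi$. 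Set $f=V/H_\phi$. Then
\[
\partial_t\bigl(fJ\bigr)\le -\lambda' VJ - VH_\phi fJ=-(\lambda'+V)VJ.
\]
Since $\lambda'+V=W$, this gives $\partial_t(fJ)\le -WVJ$. This exact match with $W$ is what makes the choice of $V$ work.

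\textbf{Step 2: coarea integration.} Since $f>0$ on $A$, integrating in $t$ gives
\[
\int_0^{\tau(x)}W V J\,dt\le f(x,0)=\frac{V}{H_\phi}(x),
\]
where $\tau(x)$ is the cut time. Now $\Phi$ restricted to $A$ covers $\Omega$ up to a null set (the cut locus), and $|\bar\nabla\hat\rho|_{\bar g}=V^{-1}$. By the weighted coarea formula,
\[
\int_\Omega W\,d\mu_\phi=\int_0^\infty\int_{\{\hat\rho=t\}}W\,V\,d\sigma_\phi\,dt\le\int_\Sigma\int_0^{\tau(x)}WVJ\,dt\,d\sigma_\phi\le\int_\Sigma\frac{V}{H_\phi}\,d\sigma_\phi .
\]
This is \eqref{hk11}. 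Some technical care is needed here. One must justify that every point of $\Omega$ is reached by a minimizing $\hat g$-geodesic from $\Sigma$ (compactness of $\overline\Omega$ in the complete manifold under (C1')), and that $J>0$ before the cut time.

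\textbf{Step 3: equality case (the main obstacle).} If equality holds, every inequality above is an equality. First, the cut locus must contain no extra mass, so $\tau(x)$ is the first time $fJ$ reaches $0$. Second, along all geodesics the two inequalities used in Lemma \ref{lem:3. 1} must be equalities: the weighted substatic inequality of Lemma \ref{lem2. 4} and the Cauchy--Schwarz step in Lemma \ref{lem:weighted-mean-curvature-estimate}, at least at $t=0$ on $\Sigma$ itself.

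Equality in Cauchy--Schwarz forces two things. It forces $\kappa_1=\dots=\kappa_m=\kappa$, so $\Sigma$ is umbilic. It also forces $\sqrt{b\lambda'}=\kappa\sqrt{b/\lambda'}\,\langle X,\nu\rangle$ together with $\langle X,\nu\rangle^2=|X|^2$ wherever $b\ne0$. Hence at every point either $\phi'(r)=0$, or $\nu=\pm\partial_r$.

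Now consider the open set $U=\{x\in\Sigma:\phi'(r(x))\neq0\}$. On $U$ the normal is radial, so each component of $U$ lies in a slice $r=r_0$. If $U\neq\emptyset$, I would argue that the connected component of $\Sigma$ meeting it is the whole slice $N\times\{r_0\}$. The slice is closed, and on it $\phi'(r_0)\ne0$, so the set where $\nu$ is radial is both open and closed in $\Sigma$. This gives alternative (1).

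If $U=\emptyset$, then $\phi'\equiv0$ along $\Sigma$ and $\Sigma$ is umbilic. To conclude alternative (2) I would still need $\Sigma$ to lie in the region where $\phi$ is constant, not merely where $\phi'=0$ pointwise. Since $\phi$ is radial and $\Sigma$ is connected, the values of $r$ attained on $\Sigma$ form an interval $I$. If $I$ has interior, $\phi$ is constant on $I$. If $I$ is a single point, $\Sigma$ is a slice, which is again alternative (1).

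For the converse, one checks directly that slices give equality. For umbilic hypersurfaces in a region where $\phi$ is constant, equality is the unweighted Brendle equality case, which I would take from \cite{Brendle2013}.

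The delicate part is making the pointwise equality argument rigorous while the flow passes through possible focal points. I would try first to restrict the equality analysis to $t=0$, using continuity of the integrands.
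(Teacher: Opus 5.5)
Your proposal is correct and follows essentially the same route as the paper. The shared steps are: the $\widehat g=V^{-2}\bar g$ normal flow with $\bar g$-speed $V$; the weighted Jacobian evolution $\partial_t J=-VH_\phi J$, which together with Lemma \ref{lem:3. 1} and $\lambda'+V=W$ gives $\partial_t\bigl(\tfrac{V}{H_\phi}J\bigr)\le -VWJ$; the weighted coarea formula off the cut locus; and the Cauchy--Schwarz equality analysis of Lemma \ref{lem:weighted-mean-curvature-estimate}, split according to whether $\phi'$ vanishes on $\Sigma$.

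Your equality argument is somewhat more careful than the paper's. Obtaining equality at $t=0$ for a.e.\ $x$ and then everywhere by continuity is valid, and noting that $r(\Sigma)$ is an interval correctly turns $\phi'=0$ on $\Sigma$ into either $\phi$ constant or a slice. The one soft spot is the converse for alternative (2), and the paper shares it: the paper does not prove it. Brendle's result also does not supply it, since it only asserts that equality forces umbilicity.
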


\begin{proof}

Since \(\overline{\Omega}\) is compact, there exist constants \(0<\underline V\leq
\overline V<\infty\) such that
\[
\underline V\leq V\leq \overline V
\qquad\text{on }\overline{\Omega}. 
\]
Consider the conformal metric
\[
\widehat g:=V^{-2}\bar{g}
\]
on \(\overline{\Omega}\). 
The above bounds imply that \(\widehat g\) and \(\bar{g}\) are uniformly
equivalent:
\[
\overline V^{-2}\bar{g}\leq \widehat g\leq \underline V^{-2}\bar{g}. 
\]
In particular, \((\overline{\Omega},\widehat g)\) is a compact
Riemannian manifold with smooth boundary. 

Since \(\widehat g=V^{-2}\bar{g}\), the inward
\(\widehat g\)-unit normal is given by
\[
\widehat\nu=-V\nu. 
\]

We consider the inward \(\widehat g\)-normal exponential map
\[
F:\mathcal D\longrightarrow \overline{\Omega},
\qquad
F(p,t):=\exp_p^{\widehat g}\bigl(t\widehat\nu(p)\bigr),
\]
where
\[
\mathcal D
:=
\bigl\{(p,t)\in\Sigma\times[0,\infty):0\leq t<\tau(p)\bigr\},
\]
and \(\tau(p)\) is the inward cut time of the
\(\widehat g\)-normal geodesic starting from \(p\). 

For \(0\leq t<\tau(p)\), the curves
\[
t\longmapsto F(p,t)
\]
are \(\widehat g\)-unit speed geodesics. Since the conformal change of
metric preserves angles, these geodesics remain orthogonal to the
hypersurfaces
\[
\Sigma_t
:=
F\bigl(\{p\in\Sigma:\tau(p)>t\},t\bigr). 
\]

Let
\[
\operatorname{Cut}_{\widehat g}(\Sigma)
:=
\left\{
F\bigl(p,\tau(p)\bigr):
\tau(p)<\infty
\right\}
\]
be the  \(\widehat g\)-cut locus of \(\Sigma\). By the classical
cut locus theorem for the distance function to a smooth hypersurface,
\[
\operatorname{Vol}_{\widehat g}
\bigl(\operatorname{Cut}_{\widehat g}(\Sigma)\bigr)=0,
\]
and
\[
\Omega\setminus\operatorname{Cut}_{\widehat g}(\Sigma)
=
F(\mathcal D). 
\]
Since
\[
d\mu_{\widehat g}=V^{-(m+1)}\,d\mu
\]
and the measures
\(d\mu_{\widehat g}\), \(d\mu\), and \(d\mu_\phi\) are mutually
absolutely continuous on \(\overline{\Omega}\). Consequently,
\[
\mu_\phi\bigl(\operatorname{Cut}_{\widehat g}(\Sigma)\bigr)=0. 
\]
Thus, the speed-\(V\) flow sweeps out all of \(\Omega\) up to a set of
zero weighted measure. 

Let $J_\phi(x,t)$ be the weighted tangential Jacobian of the flow,
defined by
\begin{equation}
F_t^*(d\sigma_\phi(t))
=
J_\phi(x,t)d\sigma_\phi(x),
\qquad
F_t(x):=F(x,t). 
\label{eq:weighted-Jacobian}
\end{equation}

Since the normal speed is $-V\nu$, the weighted area element evolves
according to
\begin{equation}
\partial_t d\sigma_\phi(t)
=
-VH_\phi\,d\sigma_\phi(t). 
\label{eq:weighted-area-evolution}
\end{equation}
Consequently,
\begin{equation}
\partial_tJ_\phi
=
-VH_\phi J_\phi. 
\label{eq:weighted-Jacobian-evolution}
\end{equation}

Now define
\[
h(x,t):=
\frac{V(F(x,t))}{H_\phi(F(x,t),t)}. 
\]
By Lemma \ref{lem:3. 1}, we have
\begin{equation}
\partial_th
=
\partial_t\left(\frac{V}{H_\phi}\right)
\leq
-\lambda'V. 
\label{eq:h-evolution}
\end{equation}
Combining \eqref{eq:weighted-Jacobian-evolution} and
\eqref{eq:h-evolution}, we obtain
\[
\begin{aligned}
\partial_t(hJ_\phi)
&=
(\partial_th)J_\phi+h\partial_tJ_\phi
\\
&\leq
-\lambda'VJ_\phi
-\frac{V}{H_\phi}VH_\phi J_\phi
\\
&=
-\bigl(\lambda'V+V^2\bigr)J_\phi. 
\end{aligned}
\]
It follows that
\begin{equation}
\partial_t(hJ_\phi)
\leq
-VWJ_\phi. 
\label{eq:key-Jacobian-inequality}
\end{equation}

Integrating \eqref{eq:key-Jacobian-inequality} from $0$ to
$\tau(x)$ gives
\begin{equation}
\frac{V(x)}{H_\phi(x)}
\geq
\int_0^{\tau(x)}
V(F(x,t))W(F(x,t))J_\phi(x,t)\,dt
+
\liminf_{t\nearrow\tau(x)}
\left(
\frac{V}{H_\phi}J_\phi
\right)(x,t). 
\label{eq:pointwise-HK}
\end{equation}

We next apply the weighted coarea formula to the map
\[
F:\mathcal D\longrightarrow\Omega. 
\]
Since
\[
|\partial_tF|_{\bar g}=V,
\]
the  weighted Jacobian of $F$ is
$VJ_\phi$. 
Therefore, for every nonnegative measurable function $f$ on $\Omega$,
\begin{equation}
\int_\Omega f\,d\mu_\phi
=
\int_\Sigma
\int_0^{\tau(x)}
f(F(x,t))V(F(x,t))J_\phi(x,t)
\,dt\,d\sigma_\phi(x). 
\label{eq:weighted-coarea}
\end{equation}
Taking $f=W$ in \eqref{eq:weighted-coarea}, we obtain
\begin{equation}
\int_\Omega W\,d\mu_\phi
=
\int_\Sigma
\int_0^{\tau(x)}
VWJ_\phi\,dt\,d\sigma_\phi. 
\label{eq:coarea-W}
\end{equation}

In the case (C1'), $M$ is smooth at the origin given by $\{r=0\}$. So, integrating \eqref{eq:pointwise-HK} over $\Sigma$ and applying
\eqref{eq:coarea-W}, we get

\begin{equation}
\begin{aligned}
\int_\Sigma\frac{V}{H_\phi}\,d\sigma_\phi
\geq
\int_\Omega W\,d\mu_\phi. 
\end{aligned}
\label{eq:HK-before-N0}
\end{equation}

\textbf{The equality case:}
Assume that equality holds in
\[
\int_\Sigma\frac{V_\phi}{H_\phi}\,d\sigma_\phi
=
\int_\Omega W_\phi\,d\mu_\phi. 
\]
Then equality must hold in every step of the proof. In particular,
equality holds in the pointwise evolution inequality of Lemma
\ref{lem:3. 1}. By the equality condition of the Cauchy--Schwarz inequality in Lemma \ref{lem:weighted-mean-curvature-estimate}, $\Sigma$ is totally umbilical,

and

\begin{equation}
\frac{\phi'}{\lambda\lambda'}\inner{X}{\nu}^2=\frac{\phi'}{\lambda \lambda'}|X|^2_{\bar{g}}
\qquad\text{on }\Sigma. 
\label{eq:equality-phi}
\end{equation}

We now distinguish two cases. 

\medskip

\noindent\textbf{Case 1: $\phi'(r)\neq0$ at some point of $\Sigma$. }

Since $\phi'\leq0$, equation \eqref{eq:equality-phi} implies that
\[
|\nabla r|_{\bar{g}}=0
\]
at every point where $\phi'(r)<0$. Hence
\[
\partial_r\perp T\Sigma,
\]
or equivalently,
\[
\nu=\partial_r
\]
on the open set
\[
\Sigma_-:=\{x\in\Sigma:\phi'(r(x))<0\}. 
\]

Since $\Sigma$ is connected and $\Sigma_-$ is nonempty, it follows
that $r$ is constant on $\Sigma$. Therefore there exists $r_0\in
(0,\bar r)$ such that
\[
\Sigma=N\times\{r_0\}. 
\]
That is, $\Sigma$ is a coordinate slice. 

\medskip

\noindent\textbf{Case 2: $\phi'(r)=0$ on $\Sigma$. }

In this case, $\Sigma$ is contained in the region where $\phi'(r)= 0$. Moreover, $\Sigma$ is a totally umbilical hypersurface in this region. 

Therefore, equality implies one of the following alternatives:
\begin{enumerate}
\item $\Sigma$ is a coordinate slice $N\times\{r_0\}$; or
\item $\Sigma$ is a totally umbilical hypersurface contained in the region where $\phi$ is constant. 
\end{enumerate}
\end{proof}

We now turn to the proof of Theorem \ref{thm1. 2'}.
\begin{theorem}\label{thm4.2}
Assume that \(\phi(r)\) and \(\lambda(r)\) satisfy (C1)-(C4). 
Let  \(\Sigma\subset N\times(0,\bar r)\) be a smooth closed
hypersurface satisfying $H_\phi>0$.
\begin{enumerate}
\item If $\Sigma$ is null-homologous, then \eqref{hk11} holds.
\item If $\Sigma$ is homologous to $N_0=N\times \{0\}$,
then
\begin{equation}\label{hk22}
\int_\Sigma\frac{m\lambda'(r)-\phi'(r)\lambda(r)}{H_\phi}\,d\sigma_\phi
\ge
\int_\Omega [(m+1)\lambda'(r)-\phi'(r)\lambda(r)]\,d\mu_\phi
+\lambda(0)\int_{N_0 \cap \partial \Omega}d\sigma_\phi. 
\end{equation}
Furthermore, equality in \eqref{hk22} holds if and only if one of the following alternatives holds:
\begin{enumerate}
\item $\Sigma$ is a coordinate slice $N\times\{r_0\}$; or
\item $\Sigma$ is a totally umbilical hypersurface contained in the region where $\phi$ is constant. 
\end{enumerate}
\end{enumerate}
\end{theorem}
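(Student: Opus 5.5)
We follow the proof of Theorem \ref{thm:4.1} almost verbatim; only the treatment of the endpoints of the normal geodesics changes, because under (C1) the manifold $M$ has a boundary $N_0=N\times\{0\}$ instead of a smooth pole. As before, set $\widehat g=V^{-2}\bar g$ on $\overline\Omega$. Here $\Omega$ is the region bounded by $\Sigma$ in case (1). In case (2), $\Omega$ is the region between $\Sigma$ and $N_0$, so that $\partial\Omega=\Sigma\cup(N_0\cap\partial\Omega)$.

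We run the inward $\widehat g$-normal exponential map $F(p,t)$ from $\Sigma$. Let $\tau(p)$ be the minimum of the cut time and the first time the geodesic hits $N_0$. Lemma \ref{lem:3. 1} applies verbatim, since Lemma \ref{lem2. 4} and Lemma \ref{lem:weighted-mean-curvature-estimate} only use (C1)--(C4). It gives $\partial_t(hJ_\phi)\le -VWJ_\phi$ with $h=V/H_\phi$, and hence the pointwise inequality \eqref{eq:pointwise-HK}.

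In case (1), $\Sigma$ is null-homologous and bounds $\Omega$ inside $N\times(0,\bar r)$. Every inward geodesic then stops at the cut locus, where the liminf term is nonnegative. The coarea formula \eqref{eq:coarea-W} then yields \eqref{hk11} exactly as before.

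In case (2), the image $F(\mathcal D)$ still covers $\Omega$ up to a $\mu_\phi$-null set. This uses a standard fact: points of $\Omega$ that are not cut points are reached by a minimizing $\widehat g$-geodesic from $\Sigma$ lying in $\Omega$. We then split $\Sigma=\Sigma_1\cup\Sigma_2$, where $\Sigma_2$ is the set of $p$ whose geodesic reaches $N_0$ at time $\tau(p)$ before being cut.

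For $p\in\Sigma_2$ the boundary term $\liminf (V/H_\phi)J_\phi$ is no longer discarded. We use it as follows. The map $p\mapsto F(p,\tau(p))$ restricted to $\Sigma_2$ covers $N_0\cap\partial\Omega$ up to a null set. This holds because each point $y\in N_0$ lies at some $\widehat g$-distance from $\Sigma$ realized by a geodesic which, by minimality, hits $N_0$ transversally.

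The key estimate is
\[
\int_{\Sigma_2}\Bigl(\tfrac{V}{H_\phi}J_\phi\Bigr)(p,\tau(p))\,d\sigma_\phi
\ \ge\ \lambda(0)\int_{N_0\cap\partial\Omega}d\sigma_\phi .
\]
To prove it, we compare the weighted area pushed to $N_0$ by $F(\cdot,\tau)$ with $J_\phi$ times the angle factor $\langle -\nu_t,-\partial_r\rangle^{-1}\ge1$. Then we need $V/H_\phi\ge\lambda(0)$ at the arrival point. Since $N_0$ is a slice, $H_\phi(N_0)=m\lambda'(0)/\lambda(0)-\phi'(0)$ with respect to $-\partial_r$. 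Hence $V/H_\phi=\lambda(0)$ on $N_0$ itself.

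So we need an upper bound $H_\phi(F(p,\tau))\le H_\phi(N_0)$. We plan to derive it from the fact that $\Sigma_{\tau}$ touches $N_0$ from the interior side along the minimizing geodesic. The subtle point is that the geodesic meets $N_0$ tangentially to its normal only if it arrives orthogonally. We will handle this by using $\widehat g$-distance to $N_0$ minimality: a minimizer from $N_0$ to $\Sigma$ leaves $N_0$ orthogonally. So we reverse roles and define $\Sigma_2$ via the nearest points on $\Sigma$ to $N_0$, or alternatively use the $\widehat g$-distance function from $N_0$ as a barrier. This touching/comparison argument is the step we expect to be the main obstacle, together with verifying that the Jacobian of the arrival map is at least $J_\phi$.

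Combining the two regions with the coarea formula then gives \eqref{hk22}.

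For the equality case we argue as in Theorem \ref{thm:4.1}. Equality forces equality in Lemma \ref{lem:3. 1} and in the Cauchy--Schwarz step of Lemma \ref{lem:weighted-mean-curvature-estimate} at $t=0$. Hence $\Sigma$ is totally umbilical, and $\phi'\langle X,\nu\rangle^2=\phi'|X|^2$ on $\Sigma$. If $\phi'<0$ somewhere on $\Sigma$, then $\nu=\pm\partial_r$ on an open set. Since $\Sigma$ is connected and the set where $\nu=\pm\partial_r$ is closed, $r$ is constant on $\Sigma$, so $\Sigma=N\times\{r_0\}$. Otherwise $\phi'\equiv0$ on $\Sigma$, giving alternative (b).

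Conversely, slices give equality by direct computation. For a slice, $H_\phi=V/\lambda$, and $\int_\Omega W\,d\mu_\phi$ plus the $N_0$ term equals $\lambda(r_0)\,|N_{r_0}|_\phi$ by the weighted divergence theorem applied to $X=\lambda\partial_r$ via Lemma \ref{lem:weighted-divergence-X}. In the umbilical, constant-$\phi$ case, equality reduces to Brendle's unweighted equality case \cite{Brendle2013}.
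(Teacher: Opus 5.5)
Your proposal follows the paper's route: Brendle's flow for $\widehat g=V^{-2}\bar g$, Lemma 3.1, the coarea formula, and a lower bound for the terminal term on $N_0$. Case (1) is fine. The gap is the $N_0$ terminal estimate, which you rightly flag as the crux, but the mechanism you propose cannot work.

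\textbf{The $N_0$ terminal estimate.} Three problems.
\begin{enumerate}
\item The bookkeeping is inverted. The arrival map $G(p)=F(p,\tau(p))$ satisfies $G^*d\sigma^{N_0}_\phi=(J_\phi/\cos\theta)\,d\sigma_\phi$, with $\cos\theta=\langle-\nu_\tau,-\partial_r\rangle\le1$. So what you need at arrival is $\frac{V}{H_\phi}\cos\theta\ge\lambda(0)$, not $V/H_\phi\ge\lambda(0)$. The fact that the arrival Jacobian is at least $J_\phi$ works against you.
\item There is no touching to exploit. At a generic arrival point $\Sigma_\tau$ crosses $N_0$ transversally, and comparison with slices is available only at critical points of $r|_{\Sigma_\tau}$. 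Moreover, the only $\widehat g$-geodesics meeting $N_0$ orthogonally are the radial ones, and these are flow lines only from points where $\nu=\partial_r$.
\item The pointwise bound is false. Take $N=\mathbb S^m$, $\lambda(r)=1+r$, $\phi\equiv0$; this is Euclidean space outside the unit ball, and (C1)--(C4) hold with $V=m$ and $\mathcal C_{\lambda,\phi}=0$. Let $\Sigma=\{|x-c|=R\}$ with $|c|<1$ and $R>1+|c|$. The flow surfaces are spheres centred at $c$, and each normal ray meets $N_0$ exactly once before the focal point $c$. At the arrival point $y$ one gets $V/H_\phi=|y-c|$ and $\frac{V}{H_\phi}\cos\theta=1-\langle c,y\rangle$. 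At $y=c/|c|$ both equal $1-|c|<\lambda(0)$. There $\Sigma_\tau$ is tangent to $N_0$ from the side \emph{away} from $\Omega$, so $H_\phi(\Sigma_\tau)>H_\phi(N_0)$, the opposite of what you wanted. Yet the terminal term equals $\int_{N_0}(1-\langle c,y\rangle)\,d\sigma=|\mathbb S^m|=\lambda(0)|N_0|$. So the estimate holds only after integration, and any proof must be global.
\end{enumerate}
The paper settles this step in one line, citing $V/H_\phi=\lambda$ on slices and the area formula. That identity concerns the mean curvature of $N_0$, not of $\Sigma_\tau$, so it is a pointwise argument of the same kind and does not supply the missing step either.

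\textbf{The covering claim.} This also fails in the present setting. A $\widehat g$-minimizing curve from a point of $\Omega$ to $\Sigma$ inside $\overline\Omega$ is a normal geodesic only if it avoids $N_0$. When $N_0$ is not convex as seen from $\Omega$, it may wrap around $N_0$. In the same example with $c=2e_1$ and $R>3$, every point near $\tfrac32e_1$ is reached by a normal ray of $\Sigma$ only after that ray has crossed $N_0$ (near $-e_1$) or passed the focal point $c$. Hence $F(\mathcal D)$ misses an open subset of $\Omega$, and
\[
\int_\Sigma\int_0^{\tau}VWJ_\phi\,dt\,d\sigma_\phi<\int_\Omega W\,d\mu_\phi,
\]
which is the wrong direction. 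Likewise, no point of the cap $\{\langle c,y\rangle>1\}\subset N_0$ is an arrival point. In that example \eqref{hk22} is just the Euclidean Heintze--Karcher inequality for the region enclosed by $\Sigma$, so it still holds. But it holds only because the terminal term exceeds $\lambda(0)|N_0|$ by exactly $(m+1)$ times the missed volume: a global compensation your argument does not capture.

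\textbf{Where \eqref{hk22} does follow cleanly.} Suppose $(\lambda,\phi)$ extend across $N_0$ to a pole satisfying (C1') and (C2)--(C5). Then the added region $\{r<0\}$ has $\int W\,d\mu_\phi=\lambda(0)|N_0|_\phi$ by Lemma 2.1 and the divergence theorem. So \eqref{hk22} becomes \eqref{hk11} for the filled-in domain.

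\textbf{Equality case.} Brendle's equality analysis only gives ``equality $\Rightarrow$ umbilic''. It therefore does not yield the converse for alternative (b).
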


\begin{proof}
In the case (C1), $M$ is a manifold with boundary $\partial M=N\times \{0\}$. If $\Sigma$ is null-homologous, The proof proceeds in exactly the same way as that of Theorem \ref{thm:4.1}.

If $\Sigma$ is null-homologous. Integrating \eqref{eq:pointwise-HK} over $\Sigma$ and applying
\eqref{eq:coarea-W}, we get

\begin{equation}
\begin{aligned}
\int_\Sigma\frac{V}{H_\phi}\,d\sigma_\phi
\geq
\int_\Omega W\,d\mu_\phi+\int_\Sigma
\liminf_{t\nearrow\tau(x)}
\left(
\frac{V}{H_\phi}J_\phi
\right)(x,t)
\,d\sigma_\phi(x). 
\end{aligned}
\label{eq:HK-before-N00}
\end{equation}

It remains to estimate the terminal term. Along every coordinate
slice
\[
N_r=N\times\{r\},
\]
with unit normal $\partial_r$, we have
\[
H=m\frac{\lambda'}{\lambda}. 
\]
Hence,
\[
H_\phi
=
m\frac{\lambda'}{\lambda}-\phi'
=
\frac{m\lambda'-\phi'\lambda}{\lambda}
=
\frac{V}{\lambda}. 
\]
Therefore,
\begin{equation}
\frac{V}{H_\phi}=\lambda
\qquad\text{on }N_r. 
\label{eq:V-Hphi-slice}
\end{equation}

The terminal set of the inward normal flow consists of the inner
boundary $N_0$ and the cut locus. The cut locus contribution is
nonnegative. On the part of the terminal set lying on $N_0$, by
\eqref{eq:V-Hphi-slice} and the area formula, we have
\begin{equation}
\int_\Sigma
\liminf_{t\nearrow\tau(x)}
\left(
\frac{V}{H_\phi}J_\phi
\right)(x,t)
\,d\sigma_\phi(x)
\geq
\lambda(0)\int_{N_0 \cap \partial \Omega}d\sigma_\phi. 
\label{eq:N0-contribution0}
\end{equation}

Combining \eqref{eq:HK-before-N00} and \eqref{eq:N0-contribution0}, we
conclude that
\[
\int_\Sigma\frac{V}{H_\phi}\,d\sigma_\phi
\geq
\int_\Omega W\,d\mu_\phi
+
\lambda(0)\int_{N_0 \cap \partial \Omega}d\sigma_\phi. 
\]

we obtain
\[
\int_\Sigma\frac{V_\phi}{H_\phi}\,d\sigma_\phi
\geq
\int_\Omega W_\phi\,d\mu_\phi
+
\lambda(0)\int_{N_0}d\sigma_\phi. 
\]
The characterization of the equality case follows in the same way as in Theorem \ref{thm:4.1}.
\end{proof}

\begin{theorem}
Assume that \(\phi(r)\) and \(\lambda(r)\) satisfy (C1)-(C4) or (C1') and (C2)-(C5).  Let $\Sigma\subset M^{m+1}$ be a smooth closed embedded $\phi$-CMC hypersurface, i. e. , $H_\phi$ is constant on $\Sigma$. Then one of the following alternatives holds:
\begin{itemize}
\item[(1)] $\Sigma$ is a coordinate slice $N\times \{r_0\}$; or
\item[(2)] $\Sigma$ is a totally umbilical hypersurface contained in the region where $\phi$ is constant. 
\end{itemize}
\end{theorem}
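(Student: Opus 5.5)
The plan is Ros's argument: combine the weighted Minkowski identity (Lemma~\ref{lem:weighted-Minkowski}) with the Heintze--Karcher inequalities (Theorems~\ref{thm:4.1} and \ref{thm4.2}). The goal is to show that $\Sigma$ achieves equality in the relevant Heintze--Karcher inequality, and then to invoke the equality characterization.

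\textbf{Step 1: $H_\phi$ is a positive constant.} Let $\Sigma$ bound $\Omega$ in the null-homologous case, or bound $\Omega$ together with $N_0$ in the case (C1) where $\Sigma$ is homologous to $N_0$. Take $\nu$ to be the outer normal. Since $\Sigma$ is closed, at a point where $r$ attains its maximum on $\Sigma$, comparison with the slice $N_{r_{\max}}$ gives $H\ge m\lambda'/\lambda$ and $\nu=\partial_r$. Hence
\[
H_\phi\ge \frac{V}{\lambda}>0
\]
at that point, and because $H_\phi$ is constant, $H_\phi>0$ everywhere.

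\textbf{Step 2: the Minkowski identity.} With $X=\lambda\partial_r$, Lemma~\ref{lem:weighted-Minkowski} together with constancy of $H_\phi$ gives
\[
\int_\Sigma V\,d\sigma_\phi=H_\phi\int_\Sigma\langle X,\nu\rangle\,d\sigma_\phi .
\]
By the weighted divergence theorem \eqref{eq:weighted-divergence} and Lemma~\ref{lem:weighted-divergence-X},
\[
\int_\Sigma\langle X,\nu\rangle\,d\sigma_\phi=\int_\Omega W\,d\mu_\phi
\]
in the null-homologous case. In the homologous-to-$N_0$ case, the boundary $N_0$ contributes with outer normal $-\partial_r$, so
\[
\int_\Sigma\langle X,\nu\rangle\,d\sigma_\phi=\int_\Omega W\,d\mu_\phi+\lambda(0)\int_{N_0}d\sigma_\phi .
\]
Dividing by the constant $H_\phi$, the left side of \eqref{hk11} (respectively \eqref{hk22}) equals exactly its right side.

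\textbf{Step 3: rigidity.} Equality therefore holds in Theorem~\ref{thm:4.1} or Theorem~\ref{thm4.2}, and their equality characterizations give exactly alternatives (1) and (2).

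\textbf{Main obstacle.} The delicate point is Step 1, namely the sign of $H_\phi$ and the choice of orientation. I would use the farthest-point comparison as above. A subtlety is that the farthest point must be one where the outward normal equals $\partial_r$, which holds since $\Omega$ lies on the side $r\le r_{\max}$. One also needs $\Sigma$ to lie in $N\times(0,\bar r)$, as Theorems~\ref{thm:4.1} and \ref{thm4.2} require. In the (C1') case, if $\Sigma$ passes through the pole, I would perturb or note that the flow argument still applies because $M$ is smooth there. A second subtlety is that the equality analysis must also cover the $N_0$ terminal term, but this is already asserted in Theorem~\ref{thm4.2}.
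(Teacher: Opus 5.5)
Your proposal is correct and follows the paper's argument. In both, the weighted Minkowski identity and the weighted divergence theorem for $X=\lambda\partial_r$ show that a $\phi$-CMC hypersurface attains equality in the relevant Heintze--Karcher inequality, and the equality characterization then gives the two alternatives.

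The only difference is in your Step 1. The paper gets $H_\phi>0$ directly from the identity $\int_\Sigma V\,d\sigma_\phi=H_\phi\bigl(\int_\Omega W\,d\mu_\phi+\lambda(0)\int_{N_0\cap\partial\Omega}d\sigma_\phi\bigr)$, using $V,W>0$ and $\lambda(0)\ge 0$. So your farthest-point comparison, which is itself valid, is not needed.
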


\begin{proof}
By Lemma \ref{lem:weighted-divergence-X},Lemma \ref{lem:weighted-Minkowski} and weighted divergence theorem,we have

\[
\int_\Sigma V\,d\sigma_\phi
=\int_\Sigma H_\phi\langle X,\nu\rangle\,d\sigma_\phi=H_\phi\int_{\Omega} W \, d\mu_{\phi}+H_\phi\int_{N_0 \cap \partial \Omega} \lambda(0)d\sigma_\phi. 
\]
Under the conditions (C1') or that $\Sigma$ is null‑homologous, the second term on the right‑hand side of the above identity vanishes. Since $V,W>0$ and $\lambda(0)\ge 0$, it follows that $H_\phi>0$. 
Furthermore, the above identity forces equality in Theorem \ref{thm:4.1} and Theorem \ref{thm4.2}, which completes the proof. 
\end{proof}

\begin{proof}[Proof of Theorem \ref{thm1. 1}]
We apply Theorem \ref{thm:4.1} to the Euclidean space written in polar
coordinates,
\[
(\mathbb R^{m+1},\bar g)
=
\bigl([0,+\infty)\times\mathbb S^m,\,
dr^2+r^2g_{\mathbb S^m}\bigr). 
\]
Thus, the warping function is
\[
\lambda(r)=r,\qquad \kappa_N =1. 
\]
We take the weight function
\[
\phi(r)=-\frac{r^2}{4}. 
\]

Then
\[
\lambda'=1,\qquad \lambda''=0,\qquad
\phi'=-\frac r2. 
\]
Consequently,
\[
V=m\lambda'-\phi'\lambda=m+\frac{r^2}{2},
\]
and
\[
W=(m+1)\lambda'-\lambda\phi'
=m+1+\frac{r^2}{2}. 
\]
Moreover,
\[
\mathcal T_{\lambda,\kappa_N}
=
\frac{(m-1)-r\cdot0-(m-1)}{r^2}=0. 
\]
Since $V'=r$ and $V''=1$, it follows that
\[
\begin{aligned}
\mathcal C_{\lambda,\phi}
&=V''+\bigl((m-1)q-\phi'\bigr)V'
+\mathcal T_{\lambda,\kappa_N}V+2\phi' qV\\
&=1+\left(\frac{m-1}{r}+\frac r2\right)r
+2\left(-\frac {r}{2}\right)\frac{1}{r}\left(m+\frac{r^2}{2}\right)\\
&=0. 
\end{aligned}
\]
Hence the result follows immediately from Theorem \ref{thm:4.1}. 
\end{proof}

\begin{appendix}\label{appendix}

\section{Weighted Heintze--Karcher Inequalities in Space Forms}

The previous section established a  weighted Heintze–Karcher inequality for hypersurfaces in weighted warped product manifolds. In this section, we specialize this abstract result to the classical simply‑connected space forms of constant sectional curvature. Working within geodesic polar coordinates centered at a fixed origin, our goal is to derive concrete, explicit versions of the weighted Heintze–Karcher inequality for hypersurfaces in these model geometries.

Let $\mathbb M_{\kappa}^{m+1}$ be the simply connected \((m+1)\)-dimensional space form of constant
sectional curvature
\[
\kappa\in\{-1,0,1\}.
\]
Thus,
\[
\mathbb M_{-1}^{m+1}=\mathbb H^{m+1},
\qquad
\mathbb M_{0}^{m+1}=\mathbb R^{m+1},
\qquad
\mathbb M_{1}^{m+1}=\mathbb S^{m+1}.
\]

Fix a point
\[
o\in\mathbb M_{\kappa}^{m+1},
\]
and define
\[
r(x):=\operatorname{dist}_{\mathbb M_\kappa}(o,x).
\]
In the spherical case \(\kappa=1\), we assume that
$$\Omega \subset \mathbb{S}^n_+.
$$

Define the generalized sine and cosine functions by
\[
\lambda(r)=s_\kappa(r):=
\begin{cases}
\sinh r,&\kappa=-1,\\[0.15cm]
r,&\kappa=0,\\[0.15cm]
\sin r,&\kappa=1,
\end{cases}
\]
and
\[
\lambda'(r)=c_\kappa(r):=
s_\kappa'(r)=
\begin{cases}
\cosh r,&\kappa=-1,\\[0.15cm]
1,&\kappa=0,\\[0.15cm]
\cos r,&\kappa=1.
\end{cases}
\]
They satisfy
\[
s_\kappa''+\kappa s_\kappa=0,
\qquad
c_\kappa'=-\kappa s_\kappa,
\qquad
c_\kappa^2+\kappa s_\kappa^2=1.
\]
Since $\kappa_N=1$, we obtain
\[
\mathcal T_{\lambda,1}=\frac{(m-1)-\lambda\lambda''
-(m-1)(\lambda')^2}{\lambda^2}=m\kappa.
\]
\textbf{Case 1: }For \(\kappa=-1\), namely \(\mathbb H^{m+1}\),
\[
\lambda(r)=\sinh r,
\qquad
\lambda'(r)=\cosh r.
\]
Therefore,
\[
V=m\cosh r-\phi'(r)\sinh r,
\]
and
\[
C_{\lambda,\phi}
=\bigl(-\phi'-\phi'''+\phi'\phi''\bigr)\sinh r -
\bigl((m+1)\phi''+(\phi')^2\bigr)\cosh r
+(m+1)\phi'\frac{\cosh^2 r}{\sinh r}.
\]
If $C_{\lambda,\phi} \ge 0$ and $H_\phi>0$, we have
\[
\int_\Sigma
\frac{
m\cosh r-\phi'(r)\sinh r
}{
H_\phi
}
\,d\sigma_\phi
\geq
\int_\Omega
\left[
(m+1)\cosh r-\phi'(r)\sinh r
\right]
\,d\mu_\phi.
\]
\textbf{Case 2: }For \(\kappa=0\), namely \(\mathbb R^{m+1}\),
\[
\lambda(r)=r,
\qquad
\lambda'(r)=1.
\]
Therefore
\[
V=m-r\phi'(r),
\]
and
\[
C_{\lambda,\phi}
=-r\phi'''
-\bigl((m+1)\phi''+(\phi')^2\bigr)
+\frac{(m+1)\phi'}{r}
+r\phi'\phi''.
\]
If $C_{\lambda,\phi} \ge 0$ and $H_\phi>0$, we have
\[
\int_\Sigma
\frac{
m-r\phi'(r)
}{
H_\phi
}
\,d\sigma_\phi
\geq
\int_\Omega
\left[
m+1-r\phi'(r)
\right]
\,d\mu_\phi.
\]
\textbf{Case 3: }For \(\kappa=1\), namely \(\mathbb S^{m+1}\), under the hemisphere
assumption
\[
\Omega\subset B_R(o),
\qquad
R<\frac{\pi}{2},
\]
we have
\[
\lambda(r)=\sin r,
\qquad
\lambda'(r)=\cos r.
\]
Hence
\[
V=m\cos r-\phi'(r)\sin r,
\]
and 
\[
C_{\lambda,\phi}=
\bigl(\phi'-\phi'''+\phi'\phi''\bigr)\sin r -
\bigl((m+1)\phi''+(\phi')^2\bigr)\cos r
+(m+1)\phi'\frac{\cos^2 r}{\sin r}.
\]
If $C_{\lambda,\phi} \ge 0$ and $H_\phi>0$, we have
\[
\int_\Sigma
\frac{
m\cos r-\phi'(r)\sin r
}{
H_\phi
}
\,d\sigma_\phi
\geq
\int_\Omega
\left[
(m+1)\cos r-\phi'(r)\sin r
\right]
\,d\mu_\phi.
\]

\subsection{Some weights satisfying the structural conditions}

The following table lists several explicit radial potentials for which
$C_{\lambda,\phi}\geq 0$.

\[
\renewcommand{\arraystretch}{1.8}
\begin{array}{|c|c|c|c|}
\hline
\text{Ambient space} & \lambda(r) & \phi(r) & C_{\lambda,\phi} 

\\ \hline
\mathbb R^{m+1}
&
r
&
\displaystyle -r^k
&
k(k-2)r^{k-2}
\left(m+k+kr^k\right),k \ge 2

\\ \hline
\mathbb S_+^{m+1}
&
\sin r
&
-e^{-\sqrt2\cos r}
&
\begin{aligned}
\sqrt{2}\,e^{-\sqrt{2}\cos r}\sin^2 r
\biggl[
-2+\sqrt{2}(m+4)\cos r \\
\qquad +2\sin^2 r\biggl(1+e^{-\sqrt{2}\cos r}\biggr)
\biggr]
\end{aligned}

\\ \hline
\mathbb H^{m+1}
&
\sinh r
&
- \cosh^k r
&
\begin{aligned}
{}&
k\sinh^2r\,\cosh^{k-3}r
\Bigl[
\bigl(2+(m+4)(k-1)\bigr)\cosh^2r
\\
&\qquad\qquad
+(k-1)\sinh^2r
\bigl(k-2+k\cosh^k r\bigr)
\Bigr], k\ge 1
\end{aligned}

\\ \hline
\end{array}
\]

\end{appendix}

\bibliographystyle{plain}
\bibliography{References. bib}

\end{document}